\def\widebreve{\mathpalette\wide@breve}
\def\wide@breve#1#2{\sbox\z@{$#1#2$}%
     \mathop{\vbox{\m@th\ialign{##\crcr
\kern0.08em\brevefill#1{0.8\wd\z@}\crcr\noalign{\nointerlineskip}%
                    $\hss#1#2\hss$\crcr}}}\limits}
\def\brevefill#1#2{$\m@th\sbox\tw@{$#1($}%
  \hss\resizebox{#2}{\wd\tw@}{\rotatebox[origin=c]{90}{\upshape(}}\hss$}
\theoremstyle{plain}
\newtheorem{theorem}{Theorem}[section]
\newtheorem{lem}[theorem]{Lemma}
\newtheorem{prop}[theorem]{Proposition}
\newtheorem{cor}[theorem]{Corollary}
\newtheorem{quest}[theorem]{Question}
\theoremstyle{definition}
\newtheorem{ex}[theorem]{Example}
\newtheorem{rem}[theorem]{Remark}
\numberwithin{equation}{section}
\newtheorem{example}[theorem]{Example}
\newtheorem{remark}[theorem]{Remark}
\newtheorem{proposition}[theorem]{Proposition}
\newcommand{\CB}[1]{\begin{color}{black}#1\end{color}}
\newcommand{\CBB}[1]{\begin{color}{black}#1\end{color}}
\newcommand{\CBBB}[1]{\begin{color}{black}#1\end{color}}
\newcommand{\cB}{\mathcal B}
\newcommand{\cC}{\mathcal C}
\newcommand{\cD}{\mathcal D}
\newcommand{\cF}{\mathcal F}
\newcommand{\cP}{\mathcal P}
\newcommand{\NN}{\mathbb N}
\newcommand{\RR}{\mathbb R}
\DeclareMathOperator{\Card}{card}
\DeclareMathOperator{\Rank}{rank}
\DeclareMathOperator{\supp}{supp}
\begin{document}

\title{A core variety approach to the pure $Y =X^d$ truncated moment problem: part 1}
\author{Lawrence Fialkow}
\address{Lawrence Fialkow, 
State University of New York, New Paltz, NY, 12561 U.S.A.}
\email{fialkowl@newpaltz.edu}

\author[A. Zalar]{Alja\v z Zalar}
\address{Alja\v z Zalar, 
Faculty of Computer and Information Science, University of Ljubljana  \& 
Faculty of Mathematics and Physics, University of Ljubljana  \&
Institute of Mathematics, Physics and Mechanics, Ljubljana, Slovenia.}
\email{aljaz.zalar@fri.uni-lj.si}
\subjclass[2020]{Primary 44A60, 47A57, 47A20; Secondary 47N40.}

\keywords{Truncated moment problems, representing measure, core variety of a multisequence, moment matrix}
\date{\today}

\begin{abstract}
Let $\beta \equiv\beta^{(2n)}$ be a real bivariate sequence of degree $2n$.
We study the existence of  representing measures
for $\beta$ supported in the curve $y=x^{d}$ ($d\ge 1$)
in the case when all column dependence relations in the moment matrix
$M_n(\beta)$ are generated by the relation $Y=X^{d}$. We prove that the core variety
of $\beta$, $\mathcal{CV}(L_{\beta})$, is nonempty (equivalently, representing measures exist) if and 
only if $C$, the partially defined \textit{core matrix} of $\beta$, admits a 
positive, recursively generated completion $C[A]$.
Moreover,
 $\mathcal{CV}(L_{\beta})$
is the entire curve $y=x^{d}$
if and only if there is a positive definite completion $C[A]$.
In the remaining case, if there is a measure, it is unique and finitely
atomic.  
For $d = 3$, we use these results to compute the core variety of $\beta$ and
give new characterizations of the existence of representing measures,
which complement a result of \cite{f1}.
\end{abstract} 
\maketitle

\setcounter{equation}{0}
\section{\label{Section 1}Introduction.}

Given a bivariate sequence of degree $2n$,
\begin{equation}
\label{def:beta}
\beta\equiv \beta^{(2n)} = \{\beta_{ij}: i,j \ge 0, i+j \le 2n 
       \},~\beta_{00}=1,
\end{equation}
and a closed set $K\subseteq \mathbb{R}^{2}$, 
the Truncated $K$-Moment Problem (TKMP) seeks conditions
on $\beta$ such  that there exists a positive Borel
measure $\mu$ on $\mathbb{R}^{2}$, with
$\supp\mu \subseteq K$, satisfying
\[ \beta_{ij} =\int_{\mathbb{R}^{2}} x^{i}y^{j} d\mu(x,y) ~~~~(i,j\ge 0, i+j \le 2n);\]
 $\mu$ is a
 \textit{$K$-representing measure} for $\beta$.
A comprehensive reference for all aspects of the Moment Problem is the recent treatise of K.  Schm\"{u}dgen \cite{sch}.
Apart from solutions based on semidefinite programming and optimization,
 several different \textit{abstract} solutions to TKMP appear in the literature,
including the Flat Extension Theorem \cite{cf2005}, the Truncated Riesz-Haviland Theorem \cite{cf2008}, the idempotent approach of \cite{vas}, and, more recently,
the Core Variety Theorem \cite{bf}. 
By a \textit{concrete} solution to TKMP we mean an implementation of one of the abstract
theories involving only basic linear algebra and solving algebraic equations (or estimating the size of the solution set).
The ease with which any of the abstract results can be applied
to solve particular moment problems in concrete terms varies considerably depending on the
problem, with most concrete results attributable to the Flat Extension Theorem and very few to the other approaches.
In the sequel we show how the Core Variety Theorem (Theorem \ref{cvt} below) can indeed be applied to certain concrete moment problems,  namely when $K$ is the planar curve $y =x^d$ ($d \geq 1$).

In the classical literature
TKMP has been solved concretely in terms of positive Hankel matrices when $K$ is the real line, the
 half-line $[0,+\infty)$, or the 
closed interval $[a,b]$ (cf.\ \cite{ST,cf1991}). For the case when $K$ is a planar curve $p(x,y) = 0$ with $\deg p \le 2$,
TKMP has been solved concretely in terms of moment matrix extensions 
(see Theorem \ref{degree2} below,
\cite{cf2002,cf2004,cf2005a,f2}).  In \cite{f1}  moment matrix extensions
are used to concretely solve the truncated moment problem for $y=x^{3}$ and to
solve (in a less concrete sense)
 truncated
moment problems on curves of the form $ y = g(x)$ and $yg(x) = 1$ ($g \in \mathbb{R}[x]$).
 More recently, several authors have intensively studied TKMP on certain planar curves of higher degree, using moment matrix extensions
and a ``reduction of degree" technique to improve and extend the results of  \cite{f1} 
(cf.\ \cite{z1,z2,z3,z4,yz}). 
We also note that for closed planar sets $K$ that are merely semi-algebraic, such as the closed
unit disk, very little is known concerning concrete solutions to TKMP (cf.\ \cite{cf2000}).

The results cited just above do not provide concrete solutions to TKMP for planar curves of the form $y=x^d$ $(d\geq 4)$. The aim of this note 
 is to illustrate how the core variety, described in Theorem \ref{cvt}, can be used to study 
TKMP for $K = \Gamma$,
  the  planar curve $y=x^{d}$ ($d\ge 1$),
  when the associated moment matrix $M_n(\beta)$ is \textit{$(y-x^d)$-pure}, i.e., the column dependence relations in $M_n(\beta)$ are precisely those that can be derived from the column relation $Y=X^d$ by \textit{recursiveness} and linearity (see just below for terminology and notation).
  The core variety of $\beta$ coincides with the
 union of supports of all representing
 measures for $\beta$, and in Section \ref{Section 2} we develop a core variety framework for studying TKMP in the $(y - x^{d})$-pure case.
 In Theorem \ref{Jmeasure} we prove that $\beta$
          has a representing measure if and only if $C$, the partially defined
\textit{core matrix} for $\beta$, admits a positive semidefinite, recursively generated completion $C[A]$. 
The core variety of $\beta$  coincides with the entire curve $y=x^{d}$ if and only if there exists positive definite completion $C[A]$.
In the remaining case of a measure, it is unique, with support a finite subset of $\Gamma$. 
In Section \ref{Section 4} we apply the results of Section \ref{Section 2} to
compute the core variety of $\beta$ in the $(y-x^{3})$-pure truncated moment problem (see Theorem \ref{y=x3}); this result subsumes 
a result of \cite{f1} which used a lengthy flat extension construction to give a necessary and sufficient
condition for
 the existence of a representing measure. 

\section{Preliminaries}
\label{sec:prel}

Although our focus in the sequel is TKMP for
 the planar curves $y = x^d$, we note that the following \CBB{discussion,} and the results we cite from
 \CBB{\cite{b,bf,cf2005,cf2008,f3},}
generalize to the \textit{multivariable} truncated moment problem.

Let $\mathcal{P} := \mathbb{R}[x,y]$ and 
let $\mathcal{P}_{k} := \{ q \in \mathcal{P}\colon \deg q\le k\}$.
Given $\beta \equiv \beta^{(2n)}$ as in \eqref{def:beta}, define the
\textit{Riesz functional}
$ L_{\beta}:\mathcal{P}_{2n}\longrightarrow \mathbb{R}$ by
 $$
  \sum a_{ij}x^{i}y^{j} \longmapsto
  \sum a_{ij}\beta_{ij}.      $$
For a sequence $\beta \equiv \beta^{(2n)}$ with Riesz functional $L_{\beta}$, the 
\textit{moment matrix} $M_{n}$ has rows and columns 
indexed by the monomials in $\mathcal{P}_{n}$
in degree-lexicographic order, i.e.,
$\textit{1}, X, Y, X^{2}, XY, Y^{2},\ldots, X^{n}, \ldots, Y^{n}.$
In this case, the element of $M_{n}$ in row $X^{i}Y^{j}$, column $X^{k}Y^{l}$ is 
$\beta_{i+k,j+l}$.
More generally,
for $r,s \in \mathcal{P}_{n}$, with coefficient vectors $\widehat{r},
\widehat{s}$ relative to the basis of monomials, we have 
\begin{equation}
\label{def:evaluation-v2}
\CB{
\langle M_{n} \widehat{r},\widehat{s}\rangle
:= L_{\beta}(rs).}
\end{equation}
In the sequel, for $q \in \mathcal{P}_{n}$, 
$q = \sum a_{ij}x^{i}y^{j}$,
\CB{we set} 
\begin{equation} 
\label{def:evaluation}
    \CB{q(X,Y) := \sum a_{ij}X^{i}Y^{j}~ (=M_{n} \widehat{q}).}
\end{equation}

If $\beta$ has a $K$-representing measure $\mu$, then
 $L_{\beta}$ is \textit{$K$-positive}, i.e.,
$q\in \mathcal{P}_{2n}, ~ q|K\ge 0 \Longrightarrow L_{\beta}(q)\ge 0$ (since $L_{\beta}(q)=\int_K q d\mu$).
 The converse is not true; instead, the Truncated Riesz-Haviland Theorem \cite{cf2008} shows
that $\beta$ admits a $K$-representing measure if and only if $L_{\beta}$
admits an extension to a $K$-positive linear functional on $\mathcal{P}_{2n+2}$.
In \cite{b} G. Blekherman proved that if $M_n$ is positive semidefinite and $\Rank M_{n}\le 3n-3$, then $L_{\beta}$ is $\mathbb{R}^{2}$-positive,
so the Truncated Riesz-Haviland Theorem then implies that $\beta^{(2n-1)}$ has a representing measure.
Using special features of the proof of Theorem \ref{f1thm} (below), in \cite{ef} C. Easwaran and the first-named author
exhibited a class of Riesz functionals that are positive but have no representing measure.
Apart from these results, it seems
 very difficult to verify positivity of Riesz functionals in examples without first proving the
existence of representing measures.

 Several basic \textit{necessary} conditions for a representing measures $\mu$ can be expressed in terms related to moment matrices
(cf. \cite{cf2005});
we will refer to these without further reference in the sequel:
\newline 
i) $M_{n}(\beta)$ is \textit{positive semidefinite}: $\langle M_{n} \widehat{r},\widehat{r}\rangle
= L_{\beta}(r^{2}) = \int r^{2} d\mu \ge 0~ (\forall r \in \mathcal{P}_{n}$).
\newline
ii) For any representing measure $\mu$, $\Card(\supp\mu) \ge \Rank M_{n}$.
\newline
iii) Note that a dependence relation in the column space of $M_{n}$ can be expressed as
$r(X,Y)    = 0$, where $r\in \mathcal{P}_{n}$.
Define the \textit{variety} of $M_{n}$, $\mathcal{V}(M_{n})$,
 as the common zeros of the polynomials $r\in \mathcal{P}_{n}$
such that  $r(X,Y)    = 0$.
Then $\supp\mu \subseteq \mathcal{V}(M_{n})$, so 
$\Card \mathcal{V}(M_{n}) \ge \Rank M_{n}$.
\newline
iv) $M_{n}$ is \textit{recursively generated}: whenever $r$, $s$, and $rs$ are in $\mathcal{P}_{n}$
and $r(X,Y)    = 0$, then $(rs)(X,Y)    = {0}$.
\newline
v) $M_{n}$ (or $L_{\beta}$) is \textit{consistent}: for $p\in \mathcal{P}_{2n}$, 
$p|\mathcal{V}(M_{n}) \equiv 0 \Longrightarrow L_{\beta}(p) = 0$;
 consistency implies recursiveness \cite{cfm}.
  
The Flat Extension Theorem \cite{cf2005} shows that $\beta$ admits a representing measure if and only if
$M_{n}$ admits a positive semidefinite moment matrix extension $M_{n+k}$ (for some $k\ge 0$)
for which there is a rank-preserving (i.e., \textit{flat}) moment matrix extension $M_{n+k+1}$.
Using this result, in a series of papers R. Curto and the first-named author solved TKMP for
planar curves of degrees 1 and 2 as follows.

\begin{theorem}
[{\cite[Degree-2 Theorem]{cf2002,cf2004,cf2005a,f2}}]
\label{degree2}
Suppose $r(x,y)\in \mathcal{P}$ with $\deg r\le 2$.  
For $n\ge \deg r$, $M_{n}$ has a representing measure supported in the curve $r(x,y) = 0$
if and only if $r(X,Y)    = 0$ and $M_{n}$ is positive semidefinite,
recursively generated, and satisfies $\Card \mathcal{V}(M_{n})\ge \Rank M_{n}$.
 \end{theorem}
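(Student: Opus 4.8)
The necessity of the stated conditions requires nothing new: if $\mu$ is a representing measure for $\beta$ with $\supp\mu\subseteq\{r(x,y)=0\}$, then $L_\beta(rq)=\int rq\,d\mu=0$ for every $q\in\cP_{n-\deg r}$, so $r(X,Y)=0$ in the column space of $M_n$, while positive semidefiniteness, recursive generation, and $\Card\mathcal{V}(M_n)\ge\Rank M_n$ are instances of the general necessary conditions (i)--(iv) recalled above. The entire content lies in the converse, and the plan there is to verify the hypothesis of the Flat Extension Theorem: it is enough to produce a positive semidefinite moment matrix extension of $M_n$ admitting a flat (rank-preserving) one-step extension, from which a (finitely atomic) representing measure follows.

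First I would normalize $r$. An invertible affine change of variables $(x,y)\mapsto(ax+by+e,\,cx+dy+f)$ carries a moment matrix to a moment matrix and preserves positive semidefiniteness, rank, recursive generation, the column relation $r(X,Y)=0$, and $\Card\mathcal{V}(M_n)$; after such a substitution $r$ becomes one of the classical normal forms: a line $y=0$ when $\deg r=1$; a degenerate conic $x^2=0$, $xy=0$, or $y(y-1)=0$; a parabola $y=x^2$; an ellipse/circle $x^2+y^2=1$; or a hyperbola $xy=1$. It then suffices to treat each normal form separately.

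The cases split into two families. When $r$ is a product of at most two real linear forms, the curve is a union of lines, and $r(X,Y)=0$ together with recursive generation kills all columns of $M_n$ indexed by monomials vanishing on that union, so that $M_n$ is governed by one-variable Hankel data along the component lines; one writes $\beta=\beta^{(1)}+\beta^{(2)}$ by splitting along the intersection point (when there is one), solves the classical Hamburger/Stieltjes truncated problem on each line, and recombines, using $\Card\mathcal{V}(M_n)\ge\Rank M_n$ to guarantee that enough atoms are available on each component. When $r$ is an irreducible conic one reduces to a single one-variable problem: for the parabola $y=x^2$ the recursively generated relation $Y=X^2$ lets one pass from $\beta\equiv\beta^{(2n)}$ to the Hamburger sequence $\gamma\equiv\gamma^{(4n)}$ with $\gamma_{i+2j}:=\beta_{ij}$---well-defined precisely because $M_n$ is recursively generated (equivalently, consistent)---and representing measures for $\beta$ on the parabola correspond to representing measures on $\RR$ for $\gamma$, with the flat-extension and rank data transported between $M_n(\beta)$ and the Hankel matrix of $\gamma$; for the hyperbola $xy=1$ one uses $Y=X^{-1}$ to pass to a strong (bilateral) Hamburger problem; and for the circle $x^2+y^2=1$ one uses compactness of the curve together with $Y^2=1-X^2$ to land on a Hankel/trigonometric problem. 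In each instance the classical one-variable theory---a positive Hankel matrix satisfying the appropriate rank and recursiveness conditions admits a finitely atomic representing measure---produces the needed flat extension, which pulls back to a flat extension of $M_n$, and the Flat Extension Theorem finishes the argument.

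I expect the genuine difficulty to be the case-by-case bookkeeping around rank and the variety rather than any single conceptual point: for each conic type one must check that the reduced one-variable data satisfies exactly the positivity-plus-rank hypotheses demanded by the classical theorems, and, in the reducible cases, that $\Card\mathcal{V}(M_n)\ge\Rank M_n$ distributes into enough atoms on each component line---including the delicate situation in which $\mathcal{V}(M_n)$ is itself finite, so the representing measure is forced to be finitely atomic with a prescribed atom count. This is why the result is assembled from the separate analyses in \cite{cf2002,cf2004,cf2005a,f2}, each conic demanding its own verification that the Flat Extension Theorem applies.
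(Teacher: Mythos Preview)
The paper does not prove this theorem; it is quoted as a result from \cite{cf2002,cf2004,cf2005a,f2} with no proof given here. Your outline---affine normalization of $r$ to a canonical conic, case-by-case reduction to one-variable Hankel data, and invocation of the Flat Extension Theorem---is an accurate summary of the strategy pursued across those references, so there is nothing to compare against in this paper and your sketch is consistent with how the result is actually established.
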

\noindent
In \cite{cfm} it was shown that this result does not extend to $\deg r > 2$. The example
in \cite{cfm} concerns an $M_{3}$ that is positive and recursively generated, with $\Card \mathcal{V}_{\beta} = \Rank M_{3}$,
but which has no measure. In this example, there is no measure because $L_{\beta}$ is not consistent.
The results of \cite{f1} show that positivity, the variety condition, and consistency are still not sufficient 
for representing measures, as we next describe.

For $M_{n}\succeq 0$, consider the 
  \textit{ $(y-x^{3})$-pure} case, when  the column dependence relations in
$M_{n}$ are precisely those given by $Y=X^{3}$, recursiveness, and linearity, i.e.,
column relations of the form $(s(x,y)(y-x^{3}))(X,Y) = 0$    ($\deg s\le n-3$). Thus $M_{n}$ is positive,
 $\Rank M_{n} \le \Card \mathcal{V}(M_{n})$ ($=\Card \Gamma = +\infty$), and it follows from Lemma 3.1 in \cite{f1} that $M_{n}$
is consistent.
In \cite{f1} we described a particular, easily computable,  rational expression in the moment data, $\psi$,
and solved the $(y-x^{3})$-pure TKMP as follows.

\begin{theorem}\label{f1thm} 
If $M_{n}\succeq 0$ is $(y-x^{3})$-pure, then $\beta$ has a representing measure if and only if
$\beta_{1,2n-1}> \psi$. 
\end{theorem}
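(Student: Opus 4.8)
The plan is to reduce the $(y-x^3)$-pure problem to the general core-matrix criterion of Theorem~\ref{Jmeasure} and then carry out the associated computation in the special case $d=3$. First I would record the structure of $M_n(\beta)$ in the $(y-x^3)$-pure case: using the column relations $(s(x,y)(y-x^3))(X,Y)=0$ for $\deg s \le n-3$, every column $X^iY^j$ with $j\ge 1$ can be rewritten in terms of the ``pure'' columns $\textit{1},X,X^2,\ldots,X^n, X^{n-2}Y, X^{n-1}Y, \ldots$ — more precisely the columns that survive are those of the form $X^k$ ($0\le k\le n$) together with $X^kY$ for $k$ large enough that the recursive reduction cannot lower the $Y$-degree. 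The point is that the moment data $\beta_{ij}$ with $i+j\le 2n$ is, modulo the relations, determined by a one-dimensional Hankel-type block living on the curve, \emph{except} for the single ``free'' corner entry $\beta_{1,2n-1}$, which is the parameter that Theorem~\ref{Jmeasure} isolates as the only entry of the core matrix not already forced by recursive generation.

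Next I would identify the core matrix $J$ explicitly for $d=3$ and $K=\Gamma$. By Theorem~\ref{Jmeasure}, $\beta$ has a $\Gamma$-representing measure if and only if $J$ is positive semidefinite and recursively generated, and if $J\succ 0$ the core variety is all of $\Gamma$. In the $d=3$ case I expect $J$ to be (equivalent to) a Hankel matrix of size roughly $n\times n$ built from the moments $\beta_{k,0}$ pulled back to the line via $y=x^3$, augmented by one row/column whose sole undetermined entry is an affine function of $\beta_{1,2n-1}$. So ``$J\succeq 0$'' becomes a statement about positive semidefiniteness of a Hankel matrix with one free diagonal-type entry; by the classical theory of such one-parameter Hankel extension problems (nested determinants / Schur complement), $J\succeq 0$ holds precisely when that entry is at least some threshold, and that threshold is exactly the rational expression $\psi$ defined in \cite{f1}. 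The recursive-generation condition on $J$ should, in this $d=3$ situation, be automatic once $J\succeq 0$ and the earlier blocks are consistent — this is where I would lean on Lemma~3.1 of \cite{f1} (quoted in the excerpt) to say the pure structure already guarantees consistency/recursiveness below the free entry.

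I would then separate the two inequality cases. When $\beta_{1,2n-1}>\psi$, the Schur complement computation gives $J\succ 0$, so Theorem~\ref{Jmeasure} yields a representing measure (indeed with core variety equal to the whole curve, hence infinitely many atomic measures). When $\beta_{1,2n-1}=\psi$, $J$ is positive semidefinite but singular, and I would need to check whether the resulting (necessarily finitely atomic, since $\Rank J < \infty$ forces finite core variety) candidate is actually a genuine representing measure or fails recursive generation; the claimed characterization says strict inequality is required, so the boundary case $\beta_{1,2n-1}=\psi$ must fail — presumably because the rank-$r$ singular $J$ then has a column relation that is \emph{not} recursively generated within the full moment matrix $M_n(\beta)$ on $\Gamma$, equivalently the would-be atoms do not lie on $\Gamma$ or do not reproduce $\beta_{1,2n-1}$. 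And when $\beta_{1,2n-1}<\psi$, $J\not\succeq 0$, so by Theorem~\ref{Jmeasure} (and necessity of positivity of the core matrix, which follows from the necessity part of that theorem) there is no representing measure.

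The main obstacle will be the boundary case $\beta_{1,2n-1}=\psi$: showing that positive \emph{semidefiniteness} of the core matrix together with the pure structure is \emph{not} enough, i.e., that recursive generation of $J$ genuinely fails there (or that the flat-extension data it produces is inconsistent with $Y=X^3$). This is the subtle point that distinguishes $d=3$ from $d\ge 4$ and is precisely what makes Theorem~\ref{f1thm} a strict-inequality statement rather than a non-strict one; I expect to handle it by computing the single extra column relation forced in the singular $J$ and exhibiting an element $s$ with $s(X,Y)\ne 0$ in $M_n(\beta)$ violating recursiveness, or equivalently by showing the associated measure would have to be supported off $\Gamma$. A secondary, more routine obstacle is the bookkeeping needed to write $\psi$ as the exact Schur-complement threshold and to match it with the formula of \cite{f1}; this is calculation rather than difficulty, but it must be done carefully to make the ``if and only if'' sharp.
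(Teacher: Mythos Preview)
Your high-level strategy---reduce to the core-matrix criterion (Theorem~\ref{Jmeasure}) and analyze a one-parameter Schur complement---is exactly the paper's route, but you have the roles of the parameters reversed, and this matters.  For $d=3$ the core matrix $C[A]$ contains every $\beta_{ij}$ with $0\le i\le 2$, $i+j\le 2n$, together with one \emph{auxiliary} entry $A\equiv A_{2,2n-1}$ (note $2+(2n-1)>2n$, so this is \emph{not} part of the data).  The moment $\beta_{1,2n-1}$ is fixed; what Theorem~\ref{Jmeasure} gives is: $\beta$ has a measure iff \emph{there exists} $A$ with $C[A]\succeq 0$ and recursively generated.  After the row/column swap that puts $A$ next to last, $C[A]$ is orthogonally equivalent to $H[A]=\begin{pmatrix}\widehat{C}&v(A)\\v(A)^{t}&\beta_{1,2n-1}\end{pmatrix}$ with $\widehat{C}\succ 0$; the threshold $\phi=\psi$ is $\min_{A} v(A)^{t}\widehat{C}^{-1}v(A)=h^{t}C_{1}^{-1}h$, attained at a unique $A_{0}$.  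So ``$\exists A$ with $C[A]\succ 0$'' $\iff$ $\beta_{1,2n-1}>\phi$, giving the ``if'' direction exactly as you say.

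For the ``only if'' direction your sketch is off-target.  The paper does \emph{not} argue via failure of recursive generation in $M_{n}$ or atoms off $\Gamma$; it constructs, directly from the kernel of $C[A_{0}]$, a polynomial $p\in\ker L_{\beta}$ with $p(x,x^{3})=R(x)^{2}$ (resp.\ $R(x)^{2}+\epsilon^{2}$) and $\deg R\le 3n-1$, so $\mathcal{CV}(L_{\beta})$ has at most $3n-1<3n=\operatorname{rank}M_{n}$ points (resp.\ is empty), and Corollary~\ref{coretest} finishes.  Your alternative---use the necessity half of Theorem~\ref{Jmeasure}---can in fact be made to work, but not for the reasons you give: when $\beta_{1,2n-1}<\phi$ the $A$-independent principal submatrix $\begin{pmatrix}C_{1}&h\\h^{t}&\beta_{1,2n-1}\end{pmatrix}$ is already not psd (this is Remark~\ref{alt}); when $\beta_{1,2n-1}=\phi$, only $A=A_{0}$ gives $C[A]\succeq 0$, and then $\ker C[A_{0}]$ is one-dimensional, spanned by a vector with last coordinate $0$ and penultimate coordinate $-1$, so the Hankel recursive-generation condition (shift the kernel vector) fails because the shifted vector has last coordinate $-1\ne 0$ and cannot lie in a kernel whose generator ends in $0$.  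That is the concrete mechanism; your ``atoms off $\Gamma$'' heuristic and your description of the surviving columns of $M_{n}$ in the first paragraph are both incorrect and should be discarded.
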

\noindent In the proof of Theorem \ref{f1thm}, the numerical test $\beta_{1,2n-1}> \psi$ leads to a flat extension $M_{n+1}$. 
By contrast with this result, the other existence results in \cite{f1,z4} 
generally presuppose the existence of a certain positive moment matrix extension of $M_{n}$,
but do not give an explicit test for the extension.
The proof of Theorem \ref{f1thm} in \cite{f1}  is quite lengthy.
In the sequel we will use the \textit{core variety} to present a shorter, more transparent proof.
This approach also provides a core variety framework for studying the $(y-x^{d})$-pure truncated moment problem.

The core variety provides an approach to 
establishing the existence of
representing measures based on
methods of convex analysis. For the polynomial case,
this was introduced in \cite{f3}, and some of the ideas go back to \cite{fn}.
The discussion below is based on joint work of the first author with G.\ Blekherman \cite{bf},
which treats general Borel measurable functions, although here we only
require polynomials.

Given $\beta \equiv \beta^{(2n)}$ and its Riesz functional 
$L \equiv L_{\beta}$,
define $V_{0} := \mathcal{V}(M_{n})$ and for $i\ge 0$, define
$$V_{i+1} :=
\displaystyle \bigcap_{\substack{f\in \ker L,\\  f|V_{i} \ge 0}} \mathcal{Z}(f),$$
where $\mathcal{Z}(f)$ denotes the set of zeros of $f(x,y)$ in $\mathbb{R}^{2}$ 
(or, equivalently, in $V_{i}$).
We define the                                                
\textit{core variety} of $L$ by
$$\mathcal{CV}(L):= \displaystyle \bigcap_{i\ge 0} V_{i}.$$


\begin{prop}
[{\cite{f3}}]
\label{coreprop} 
 \noindent If $\mu$ is a representing measure for $L$, then 
 $\supp\mu
\subseteq \mathcal{CV}(L)$. 
\end{prop}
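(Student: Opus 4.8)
The plan is to prove by induction on $i\ge 0$ that $\supp\mu\subseteq V_i$; intersecting over all $i$ then gives $\supp\mu\subseteq\bigcap_{i\ge 0}V_i=\mathcal{CV}(L)$. Throughout I would use one elementary fact about a positive Borel measure $\mu$ and a polynomial $f$: if $f$ is $\mu$-integrable, $f\ge 0$ on $\supp\mu$, and $\int f\,d\mu=0$, then $f\equiv 0$ on $\supp\mu$ (and likewise, if $\int f^{2}\,d\mu=0$ then $f\equiv 0$ on $\supp\mu$). This holds because $\{(x,y):f(x,y)\ne 0\}$ is open, being the complement of the zero set of a continuous function, so if it met $\supp\mu$ then that intersection would be a nonempty relatively open subset of $\supp\mu$, hence of positive $\mu$-measure by the definition of support, contradicting the fact that the integrand vanishes $\mu$-almost everywhere.

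For the base case, recall $V_0=\mathcal{V}(M_n)$. If $r\in\mathcal{P}_n$ satisfies $r(X,Y)=0$ in the column space of $M_n$, then $\int r^{2}\,d\mu=L(r^{2})=\langle M_n\widehat r,\widehat r\rangle=0$, so by the remark above $r\equiv 0$ on $\supp\mu$, i.e.\ $\supp\mu\subseteq\mathcal{Z}(r)$. Taking the intersection over all such $r$ gives $\supp\mu\subseteq\mathcal{V}(M_n)=V_0$.

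For the inductive step, assume $\supp\mu\subseteq V_i$ and let $f\in\ker L$ with $f|V_i\ge 0$. Since $\supp\mu\subseteq V_i$, we have $f\ge 0$ on $\supp\mu$; since $f\in\ker L\subseteq\mathcal{P}_{2n}$, $f$ is $\mu$-integrable (as $\mu$ has all moments up to order $2n$) and $\int f\,d\mu=L(f)=0$. By the elementary fact, $f\equiv 0$ on $\supp\mu$, i.e.\ $\supp\mu\subseteq\mathcal{Z}(f)$. Intersecting over all such $f$ yields $\supp\mu\subseteq V_{i+1}$, closing the induction.

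The argument is short, and the only point requiring genuine care is the passage from ``$f$ vanishes $\mu$-almost everywhere'' to ``$f$ vanishes at every point of $\supp\mu$'' — this is exactly where continuity of polynomials and the topological definition of $\supp\mu$ are used, and it is the step I would write out most carefully. No other subtleties arise.
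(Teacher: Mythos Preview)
Your argument is correct and is exactly the standard inductive proof of this fact. Note that the paper does not actually prove Proposition~\ref{coreprop}; it is simply cited from \cite{f3}, and the base case $\supp\mu\subseteq\mathcal{V}(M_n)$ is already recorded among the necessary conditions listed before Theorem~\ref{degree2} (item~iii). Your write-up reproduces the argument one would find in \cite{f3}, including the careful justification of the passage from $\mu$-a.e.\ vanishing to vanishing on all of $\supp\mu$ via continuity.
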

\noindent If $\mu$ is a representing measure,
then 
$$\Rank M_{n}(\beta) \le \Card(\supp\mu) \le \Card  \mathcal{CV}(L_{\beta})\le \Card V_{i}
\quad
(\text{for every }i\ge 0).$$
We thus have the following test for the nonexistence of  representing measures.
\begin{cor}
[{\cite{f3}}]
\label{coretest}
If $\Card V_{i} < \Rank M_{n}$ for some $i$, then $\beta$ has no representing measure.
\end{cor}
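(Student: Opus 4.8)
The plan is to prove the contrapositive: assuming $\beta$ has a representing measure, I will show that $\Card V_i \ge \Rank M_n(\beta)$ for \emph{every} $i\ge 0$, which is incompatible with the hypothesis. So suppose $\mu$ is a representing measure for $L_\beta$. By Proposition \ref{coreprop}, $\supp\mu \subseteq \mathcal{CV}(L_\beta)$, and since $\mathcal{CV}(L_\beta) = \bigcap_{j\ge 0} V_j \subseteq V_i$ for each fixed $i$, we obtain $\supp\mu \subseteq V_i$, hence $\Card(\supp\mu) \le \Card V_i$.

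Next I would invoke the elementary necessary condition (ii), namely $\Card(\supp\mu) \ge \Rank M_n(\beta)$ for any representing measure $\mu$. Chaining the two bounds gives
$$\Rank M_n(\beta) \;\le\; \Card(\supp\mu) \;\le\; \Card V_i \qquad (i \ge 0),$$
which is precisely the inequality displayed just before the statement. This directly contradicts the hypothesis that $\Card V_i < \Rank M_n(\beta)$ for some $i$; therefore no representing measure can exist.

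There is essentially no obstacle here: the argument is a short consequence of Proposition \ref{coreprop} together with the trivial nesting $\mathcal{CV}(L_\beta)\subseteq V_i$. The only ingredient that is not purely formal is condition (ii), which is standard --- if $\mu = \sum_{k=1}^{N}\rho_k\delta_{(x_k,y_k)}$ with $\rho_k>0$, then $M_n(\beta)$ factors as $V^{\!\top} D V$, where $V$ is the $N\times\dim\mathcal{P}_n$ matrix whose rows are the evaluations of the monomial basis at the atoms and $D=\mathrm{diag}(\rho_1,\dots,\rho_N)$, so $\Rank M_n(\beta)\le N = \Card(\supp\mu)$; and if $\mu$ is not finitely atomic, then $\Card(\supp\mu)$ is infinite and both inequalities in the display are immediate. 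Hence no separate treatment of the cases $\Card V_i$ finite or infinite is required, and the corollary follows.
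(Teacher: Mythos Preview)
Your proof is correct and follows essentially the same approach as the paper: the corollary is deduced immediately from the chain $\Rank M_n(\beta)\le \Card(\supp\mu)\le \Card\mathcal{CV}(L_\beta)\le \Card V_i$, which is exactly the inequality displayed just before the statement. Your additional justification of condition (ii) via the factorization $M_n=V^\top D V$ is standard and goes slightly beyond what the paper spells out, but the core argument is identical.
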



Proposition \ref{coreprop} shows that if $\beta$ has a representing measure, then $\mathcal{CV}(L)$
is nonempty. The main result concerning the core variety is the following converse.
\begin{theorem}
[{\cite[Core Variety Theorem]{bf}}]
\label{cvt} 
 $L\equiv L_\beta$ has a representing measure if and only if $\mathcal{CV}(L)$ is nonempty.
In this case, $\mathcal{CV}(L)$ coincides with the union of supports of
all finitely atomic representing measures for $L$.
\end{theorem}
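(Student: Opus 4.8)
The plan is to prove the two implications separately. The forward direction is precisely Proposition~\ref{coreprop}: a representing measure $\mu$ for $L$ satisfies $\supp\mu\subseteq\mathcal{CV}(L)$, so $\mathcal{CV}(L)\neq\emptyset$. For the converse, I would first observe that each $V_{i}$ is a real algebraic subset of $\mathbb{R}^{2}$ (by construction $V_{i+1}$ is a common zero set of a family of polynomials), so the descending chain $V_{0}\supseteq V_{1}\supseteq V_{2}\supseteq\cdots$ stabilizes by the Noetherian property of $\mathbb{R}[x,y]$: there is $N$ with $V_{N}=V_{N+1}=\cdots=\mathcal{CV}(L)=:V$. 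Spelling out the equality $V_{N+1}=V_{N}$ yields the \emph{fixed-point property} that drives the argument: if $f\in\ker L$ and $f|_{V}\geq 0$, then $f$ vanishes identically on $V$.

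The second step is a Hahn--Banach separation argument in the finite-dimensional space $\mathcal{P}_{2n}$ and its dual. Assume $V\neq\emptyset$ and let $\mathcal{R}_{V}$ be the convex cone of linear functionals on $\mathcal{P}_{2n}$ admitting a representing measure supported in $V$; the goal is $L\in\mathcal{R}_{V}$. If $L\notin\mathcal{R}_{V}$ and $\mathcal{R}_{V}$ is closed, then there is $p\in\mathcal{P}_{2n}$ with $\Lambda(p)\geq 0$ for all $\Lambda\in\mathcal{R}_{V}$ but $L(p)<0$; testing against the point evaluations $\delta_{x}$ for $x\in V$ (which lie in $\mathcal{R}_{V}$ precisely because $V\neq\emptyset$) gives $p|_{V}\geq 0$. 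Put $f:=p-L(p)$; since $\beta_{00}=1$ we get $L(f)=L(p)-L(p)=0$, so $f\in\ker L$, while $f=p-L(p)>0$ on $V$ because $-L(p)>0$. This contradicts the fixed-point property, which would force $f|_{V}\equiv 0$ on the nonempty set $V$. Hence $L$ has a representing measure supported in $\mathcal{CV}(L)$, and by Richter's theorem one such measure is finitely atomic with at most $\dim\mathcal{P}_{2n}$ atoms.

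The step I expect to be the main obstacle is the closedness of $\mathcal{R}_{V}$ when $V$ is unbounded; for compact $V$ it follows from weak-$*$ compactness of the probability measures on $V$ together with continuity of the truncated moment map. For unbounded $V$ one must rule out moment mass escaping to infinity along a sequence of representing measures for functionals converging to $L$. I would handle this by reducing first (via Richter's theorem) to atomic measures with a bounded number of atoms, passing to a projective compactification of $\mathbb{R}^{2}$, extracting a weak-$*$ limit there, and then showing through a careful analysis of homogenizations that the limiting mass carried at infinity does not affect the moments of $L$ up to degree $2n$; separately, if $M_{n}(\beta)$ is not positive semidefinite, a short argument using $L(r^{2})<0$ for some $r$ shows $\mathcal{CV}(L)=\emptyset$, so that case is vacuous. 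Making this behavior-at-infinity analysis precise is the technical heart of the theorem.

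Finally, for the assertion that $\mathcal{CV}(L)$ equals the union of the supports of all finitely atomic representing measures, the inclusion $\supseteq$ is again Proposition~\ref{coreprop}. For the inclusion $\subseteq$, I would fix $x_{0}\in\mathcal{CV}(L)$ and consider $L':=L-\varepsilon\,\delta_{x_{0}}$ for small $\varepsilon>0$. Using the convexity of the relevant moment cone --- intuitively, that $L$ lies in its relative interior --- I would show that the moment matrix of $L'$ remains positive semidefinite and $\mathcal{CV}(L')\neq\emptyset$ for $\varepsilon$ small; applying the converse direction to $L'$ yields a finitely atomic representing measure $\mu'$ for $L'$, and then $\mu'+\varepsilon\,\delta_{x_{0}}$ is a finitely atomic representing measure for $L$ with $x_{0}$ in its support. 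The delicate point here is the lower semicontinuity of the core variety under a small point-mass perturbation at one of its own points.
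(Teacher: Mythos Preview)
The paper does not prove Theorem~\ref{cvt}; it is quoted from \cite{bf} and used as a black box throughout. There is therefore no ``paper's own proof'' to compare your proposal against.

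That said, your sketch tracks the actual argument in \cite{bf} fairly closely: stabilization of the chain $V_0\supseteq V_1\supseteq\cdots$ via Noetherianity, the resulting fixed-point property of $V=\mathcal{CV}(L)$, and a separation argument in the finite-dimensional dual of $\mathcal{P}_{2n}$. Two points deserve comment. First, the closedness of $\mathcal{R}_V$ for unbounded $V$ is indeed the technical crux, and your projective-compactification outline is in the right spirit but would need real work to make precise; in \cite{bf} this is handled via a more direct argument about limits of atomic measures with a bounded number of atoms and control of the highest-degree moments. Second, your perturbation argument for the inclusion $\mathcal{CV}(L)\subseteq\bigcup\supp\mu$ is circular as stated: asserting that $L$ lies in the relative interior of the appropriate face, and that $\mathcal{CV}(L')\neq\emptyset$ for $L'=L-\varepsilon\delta_{x_0}$, is essentially equivalent to what you are trying to prove. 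In \cite{bf} this direction is obtained by identifying $\mathcal{CV}(L)$ with the set of exposed extreme rays of the smallest face of the moment cone containing $L$, after which the inclusion follows from standard convex-geometry facts about faces and Carath\'eodory-type decompositions; your intuition is correct but the justification you give does not stand on its own.
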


In view of Proposition \ref{coreprop}, $\mathcal{CV}(L)$ is also the union of supports
of all representing measures.
In general, it may be difficult to compute the core variety,
due to the difficulty of characterizing the nonnegative polynomials on 
$V_{0}$, $V_{1}$, $V_{2},\ldots$, but Theorem \ref{cvt} leads to the following criterion for
stability.

\begin{prop}
[{\cite{bf}}]
\label{prop:intro}
If $V_{k}$ is finite, then $\mathcal{CV}(L) = V_{k}$ or $\mathcal{CV}(L) = V_{k+1}$.
\end{prop}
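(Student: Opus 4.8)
The plan is to reduce everything to a single implication and then bootstrap: \emph{if $V_k$ is finite and $V_k\neq V_{k+1}$, then $V_{k+1}=V_{k+2}$}. Granting this, the statement follows at once. The passage from $V_i$ to $V_{i+1}$ depends only on the set $V_i$ (the index $i$ enters the definition solely through the constraint $f|_{V_i}\ge 0$), so as soon as two consecutive terms of the chain coincide, say $V_j=V_{j+1}$, the whole chain is constant from index $j$ onward; since $V_0\supseteq V_1\supseteq\cdots$, this gives $\mathcal{CV}(L)=\bigcap_{i\ge 0}V_i=V_j$. Hence if $V_k=V_{k+1}$ we get $\mathcal{CV}(L)=V_k$, while if $V_k\neq V_{k+1}$ the implication yields $V_{k+1}=V_{k+2}$ and therefore $\mathcal{CV}(L)=V_{k+1}$. (If $V_k=\emptyset$ or $V_{k+1}=\emptyset$ there is nothing to prove, since then $\mathcal{CV}(L)$ is trapped between that set and $\emptyset$.)

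To prove the implication, assume $\emptyset\neq V_{k+1}\subsetneq V_k$ with $V_k$ finite, and set $S:=\{f\in\ker L:\, f|_{V_k}\ge 0\}$; this is a convex cone because $\ker L$ is a linear subspace. By the definition of $V_{k+1}$, every $f\in S$ vanishes identically on $V_{k+1}$. Moreover, for each point $q\in V_k\setminus V_{k+1}$ there is, again by the definition of $V_{k+1}$, some $f\in\ker L$ with $f|_{V_k}\ge 0$ and $f(q)\neq 0$, and hence $f(q)>0$ since $q\in V_k$. Adding up these finitely many witnesses --- the one place where finiteness of $V_k$ is used --- produces a single $g\in S$ that is strictly positive at every point of $V_k\setminus V_{k+1}$ and vanishes identically on $V_{k+1}$.

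Now take an arbitrary $f\in\ker L$ with $f|_{V_{k+1}}\ge 0$; the goal is to show that $f$ in fact vanishes on $V_{k+1}$, for then $V_{k+1}\subseteq\mathcal{Z}(f)$ for every such $f$, whence $V_{k+1}\subseteq V_{k+2}$ (the reverse inclusion is automatic from monotonicity of the chain), giving $V_{k+1}=V_{k+2}$. Consider $f+tg$ for $t>0$: it lies in $\ker L$; on $V_{k+1}$ it agrees with $f$ and so is $\ge 0$; and on the finite set $V_k\setminus V_{k+1}$ it is positive once $t$ is large enough, since $g$ is positive there. Thus $f+tg\in S$ for all sufficiently large $t$, so $f+tg$ vanishes on $V_{k+1}$; since $f+tg$ agrees with $f$ on $V_{k+1}$, this forces $f|_{V_{k+1}}=0$.

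I expect the main obstacle to be organizational rather than mathematical: one must carefully invoke that the step $V_i\mapsto V_{i+1}$ is a function of $V_i$ alone in order to turn ``one repeated term'' into ``eventually constant'', and one must not overlook the degenerate empty cases. The quantitative heart of the argument --- pushing $f+tg$ into the cone $S$ by taking $t$ large --- is elementary, and finiteness of $V_k$ is precisely what makes a single choice of $t$ work for all of $V_k\setminus V_{k+1}$ at once.
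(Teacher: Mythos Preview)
The paper does not prove this proposition; it merely cites it from \cite{bf,dds}. Your argument is correct and is essentially the standard one: produce a single witness $g\in\ker L$ nonnegative on $V_k$, vanishing on $V_{k+1}$, and strictly positive on $V_k\setminus V_{k+1}$ (finiteness makes this a finite sum), then use $f+tg$ to force any $f\in\ker L$ with $f|_{V_{k+1}}\ge 0$ to vanish on $V_{k+1}$, so the chain stabilizes at $V_{k+1}$. The handling of the empty cases and of the ``$V_i\mapsto V_{i+1}$ depends only on $V_i$'' stabilization step is also fine.
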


In the $(y-x^{d})$--pure case for $M_n(\beta)$, $V_0$ is clearly the curve
$y=x^{d}$. Since $y-x^d$ is irreducible and $\mathcal{CV}(L)$ is an algebraic set, it follows
that either $V_1 = V_0$ ($=\mathcal{CV}(L)$), or $V_1$ is finite and 
Proposition \ref{prop:intro} implies $\mathcal{CV}(L) = V_1$ or $\mathcal{CV}(L) = V_2$. We conclude this section by noting the case when
$M_n(\beta)$ has the $Y =X^d$ column relation but is \textit{not} $(y-x^d)$--pure. 
In this case there is a column relation 
$g(X,Y) =0$, where $g(x,y)$ is not
a multiple of $f(x,y) := y-x^d$. Since $f$ is irreducible, it follows that
$f$ and $g$ are relatively prime, so Bezout's Theorem implies        that
$\Card \mathcal{CV}(L) \leq \Card V_0 \leq \deg f \cdot \deg g$. 
Examples computing $\mathcal{CV}(L)$
in the finite-variety case can be found in \cite{f3}.

\setcounter{equation}{1}
\section{
A core variety approach to the pure $Y=X^{d}$ moment problem.}
\label{Section 2}

 Suppose $M_{n}(\beta)$ is positive semidefinite and $(y-x^{d})$-pure, i.e., 
the column dependence relations in $M_{n}$ are precisely the linear combinations of the column relations
\begin{equation}\label{relations}
X^{r}Y^{s+1} = X^{r+d}Y^{s} \quad{\rm{for}}~ r,s \ge 0,~ r+s \le n-d.
\end{equation} 
In this section we introduce a \textit{core matrix} $C$ associated to $\beta$; the main result of this section,
Theorem \ref{Jmeasure}, 
{characterizes the existence
of representing measures for $\beta$
in terms of the positivity properties of $C$ and ``recursiveness'' in its kernel.
Using the Core Variety Theorem 
we show that the union of supports of all representing measures is
the curve
$$\Gamma := \mathcal{Z}(y-x^{d}) = \{(x,x^{d}):~ x\in \mathbb{R}\}$$
if and only if there is a positive definite completion of the core matrix.}
{Namely, we employ the connection 
between} the existence of representing measures for
$\beta \equiv \beta^{(2n)}$ 
and
the core variety of the Riesz functional $L \equiv L_{\beta}$. 

Setting
$V_{0} = \mathcal{V}(M_{n})= \Gamma$, 
we seek to compute 
$$V_{1} := \mathcal{Z}(p\in \ker L\colon p|V_{0} \ge 0),$$
the common zeros of the polynomials in $\ker L$ that are nonnegative on $V_{0}$.
To this end, we require a concrete description of $\ker L$.

\begin{lem}
\label{Bbasis} 
Suppose $M_{n}(\beta)$ 
satisfies column relations \eqref{relations}.
Then the polynomials
\begin{align*}
f_{ij}(x,y) 
&= x^{i}y^{j} - \beta_{ij}&&\text{for }0\le i < d , j\ge 0, \text{ and }0< i+j\le 2n,\\
g_{kl}(x,y) 
&= (y-x^{d})x^{k}y^{l}&&\text{for }k,l \ge 0,~ k+l\le 2n-d.
\end{align*}
form a basis $\cB$ for $\ker L_{\beta}$.

Conversely, let $L:\cP_{2n}\to \RR$ be a linear functional such that $\cB$
is a basis for $\ker L$. Then the moment matrix $M_n(\beta)$ of the sequence $\beta$, such that $L=L_{\beta}$, satisfies column relations \eqref{relations}.
\end{lem}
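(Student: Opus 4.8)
\emph{Overview and set-up.} The assertion is purely linear-algebraic. Since $L_\beta(1)=\beta_{00}=1$, the functional $L_\beta$ is nonzero, so $\dim\ker L_\beta=\dim\cP_{2n}-1$, and the proof will reduce to bookkeeping about ``leading monomials.'' The plan is to fix the total monomial order in which $x^ay^b\succ x^{a'}y^{b'}$ exactly when $a>a'$, or $a=a'$ and $b>b'$ (i.e.\ $x$ is the more significant variable), and throughout to assume $n\ge d$ (otherwise \eqref{relations} is vacuous). Since $d\ge 1$, with respect to this order the leading monomial of $f_{ij}=x^iy^j-\beta_{ij}$ is $x^iy^j$, with coefficient $1$ (note $(i,j)\ne(0,0)$), and the leading monomial of $g_{kl}=(y-x^d)x^ky^l=x^ky^{l+1}-x^{k+d}y^l$ is $x^{k+d}y^l$, with coefficient $-1$.

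\emph{Forward direction.} First I would check $\cB\subseteq\ker L_\beta$: one has $L_\beta(f_{ij})=\beta_{ij}-\beta_{ij}\beta_{00}=0$ by $\beta_{00}=1$, and for $g_{kl}$ with $k+l\le 2n-d$ I would split exponents as $x^ky^l=x^ry^s\cdot x^ay^b$ with $r+s\le n-d$ and $a+b\le n$ (possible since $k+l\le 2n-d=(n-d)+n$), so that $g_{kl}=g_{rs}\cdot x^ay^b$ with both factors in $\cP_n$ and hence $L_\beta(g_{kl})=\langle M_n\widehat{g_{rs}},\widehat{x^ay^b}\rangle=0$, the vanishing being exactly column relation \eqref{relations} for $(r,s)$. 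Next, linear independence: the $f$-leading monomials are $\{x^iy^j:0\le i<d,\ 0<i+j\le 2n\}$ (all with $x$-exponent $<d$), the $g$-leading monomials are $\{x^ay^b:a\ge d,\ a+b\le 2n\}$ (all with $x$-exponent $\ge d$), the two sets are disjoint, and each index map is injective, so the elements of $\cB$ have pairwise distinct leading monomials and are therefore linearly independent. Finally, adjoining $1$ to the union of those two monomial sets gives precisely all monomials of degree $\le 2n$, so $\Card\cB=\dim\cP_{2n}-1=\dim\ker L_\beta$; together with independence and $\cB\subseteq\ker L_\beta$ this shows $\cB$ is a basis.

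\emph{Converse.} Write $\beta_{ij}:=L(x^iy^j)$, so $L=L_\beta$ (the normalization $\beta_{00}=1$ is understood, since $L$ is the Riesz functional of a sequence). The only hypothesis I would use is that $g_{kl}\in\ker L$ for all $k+l\le 2n-d$. Fix $(r,s)$ with $r+s\le n-d$. Since $g_{rs}\in\cP_n$, proving column relation \eqref{relations} for $(r,s)$ amounts to showing $L(g_{rs}\cdot x^ay^b)=0$ for every $a,b\ge 0$ with $a+b\le n$; but $g_{rs}\cdot x^ay^b=g_{r+a,\,s+b}$ with $(r+a)+(s+b)\le(n-d)+n=2n-d$, so this value is $0$ by hypothesis. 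Hence $M_n(\beta)$ satisfies \eqref{relations}.

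\emph{Main difficulty.} There is no genuine obstacle; the point requiring care is choosing a monomial order that at once separates the $f$-family from the $g$-family — accomplished by comparing $x$-exponents with $d$ — and is injective within each family. The remaining ingredients (the two exponent-splitting identities and the identity ``leading monomials together with $1$ $=$ all monomials of degree $\le 2n$'') are routine, and I would verify them briefly.
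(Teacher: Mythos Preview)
Your proof is correct. The overall architecture---show $\cB\subseteq\ker L_\beta$, show independence, count, then do the converse via $L(g_{rs}\cdot x^ay^b)=L(g_{r+a,s+b})$---matches the paper, and your exponent-splitting for $g_{kl}$ is exactly the paper's device. The difference lies in the independence and counting steps: the paper proves independence by evaluating a putative relation on the curve $y=x^d$ (killing the $g$-terms), observing that the exponents $i+dj$ are pairwise distinct to force $a_{ij}=0$, then arguing by continuity off the curve to force $b_{kl}=0$; it then counts $\Card\mathcal{C}$ and $\Card\mathcal{D}$ separately via $\dim\cP_k$ formulas. Your leading-monomial argument (with $x$ dominant) handles both at once: distinct leading terms give independence immediately, and the observation that $\{1\}\cup\{\text{leading monomials}\}$ partitions the degree-$\le 2n$ monomials gives the cardinality without arithmetic. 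Your route is slicker and avoids the evaluation/continuity step, at the cost of not foreshadowing the restriction-to-$\Gamma$ map $p\mapsto p(x,x^d)$ that the paper exploits heavily in the sequel.
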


\begin{remark}
\label{rem:Bbasis}
In the statement of Lemma \ref{Bbasis}, $M_n(\beta)$ does not have to be $(y-x^d)$-pure for $\cB$ to be the basis for $\ker L_\beta$. There may be column relations other than the linear combinations of \eqref{relations}, but $\cB$ will still be a basis. Another choice of a basis for $\ker L_\beta$, which works for any sequence $\beta$, is
$\{f_{ij}\}$ for $0\leq i,j$, $0<i+j\leq 2n$, where $f_{ij}$ are defined as in the statement of the lemma. However, this basis tells us nothing about the column relations of $M_n(\beta)$.
To explicitly determine column relations from the basis for $\ker L_\beta$, in addition to a ``good'' choice of the basis, the rank of $M_n(\beta)$ must also be given.
\end{remark}

\begin{proof}[Proof of Lemma \ref{Bbasis}] Clearly, each $f_{ij}\in \ker L_{\beta}$. For $k,l\ge 0$ with $k+l\le 2n-d$, $g_{kl}\in \mathcal{P}_{2n}$.
If $k+l\le n$, then 
 $$L_{\beta}(g_{kl}) = \langle M_{n}\widehat{(y-x^{d})}, \widehat{x^{k}y^{l}}\rangle
= \langle M_{n}\widehat{y} -M_n \widehat{x^{d}}, \widehat{x^{k}y^{l}}\rangle  = 0,$$ 
so $g_{kl} \in \ker L_{\beta}$
in this case. In the remaining case,  $n< k+l \le 2n-d$, so there exist integers $r,s,t,u \ge 0$
such that $r+t = k$, $s+u = l$, $r+s = n-d$, and thus $t+u = (k+l)-(r+s)\le 2n-d -(n-d)=n$.
Now  
\begin{align*}
L_{\beta}(g_{kl}) 
&= 
L_\beta((y-x^{d})x^{r}y^{s}\cdot x^{t}y^{u})
=\langle M_{n}\widehat{(y-x^{d})x^{r}y^{s}}, \widehat{x^{t}y^{u}}\rangle\\
&= 
\langle 
M_{n}\widehat{x^{r}y^{s+1}}-M_n\widehat{x^{d+r}y^{s}}, \widehat{x^{t}y^{u}}\rangle,
\end{align*}
so (\ref{relations}) implies $L_{\beta}(g_{kl})=0$.

To show that $\mathcal{B}$ is a linearly independent set of elements of $\mathcal{P}_{2n}$,
suppose $\{a_{ij}\}$ and $\{b_{kl}\}$ are sequences of real scalars (indexed as in the statement of the lemma) such that
in $\mathcal{P}_{2n}$,
\begin{equation}\label{indep}
\displaystyle \sum_{\substack{ 0\le i<d,\; j\ge 0,\\ 0<i+j\le 2n }} a_{ij}f_{ij} + 
\displaystyle  \sum_{\substack{k,l\ge 0,\\k+l\le 2n- d }} b_{kl}g_{kl} = 0.
\end{equation}
Plugging $y = x^{d}$ in (\ref{indep}), it follows that
\begin{equation} 
\label{indep-v2}
\displaystyle  \sum_{\substack{ 0\le i<d,\; j\ge 0,\\ 0<i+j\le 2n }} a_{ij}(x^{i+dj}-\beta_{ij})\equiv 0
\end{equation}
Suppose that  $0\le i,i' < d$, $j,j'\ge 0$,   $0<i+j,i'+j' \le 2n$
and $i+dj=i'+dj'$. Then $|i -i'|=d|j-j'|$, and since $|i-i'|<d$, it follows that $j=j'$ and  $i=i'$.
Thus, the $x$-exponents appearing in \eqref{indep-v2}
 are distinct,
and since \eqref{indep-v2} holds for every real $x$, it follows that each $a_{ij}=0$.
Now (\ref{indep}) implies 
$$
\displaystyle  \sum_{\substack{k,l\ge 0,\\k+l\le 2n- d }} b_{kl}x^{k}y^{l}(y-x^{d}) \equiv 0 
$$ 
Thus, for $y\not = x^{d}$, $ \sum b_{kl}x^{k}y^{l}=0$, so by continuity we have
$ \sum b_{kl}x^{k}y^{l}=0$ for all $x,y\in \mathbb{R}$. It now follows that each $b_{kl} = 0$,
so $\mathcal{B}$ is linearly independent.

Next we show that $\cB$ spans $\ker L_\beta.$
We need to prove that 
$\Card \mathcal{B}=\dim \cP_{2n}-1$ ($= \dim\ker L_{\beta}$).
Recall that 
$\dim\mathcal{P}_{2n} = \frac{(2n+1)(2n+2)}{2}$. 
Note that
$\mathcal{B}$ is the disjoint union of the sets 
$\mathcal{C}$ and $\mathcal{D}$, consisting of all $f_{ij}$ and $g_{kl}$ from the lemma, respectively.
Clearly, $\Card \mathcal{D} = \dim \mathcal{P}_{2n-d}
=\frac{(2n-d+1)(2n-d+2)}{2}$. 
To compute $\Card \mathcal{C}$, 
notice that $\Card \mathcal{C}=\Card \mathcal{E}$,
where
$\mathcal{E}$ is the index set equal to
\begin{align*}
\mathcal E
&:= \{(i,j)\colon ~ 0\le i<d,~j\ge 0,~ 0<i+j\le 2n\}\\
&=\{ \underbrace{(0,1),\ldots,(0,2n)}_{i=0},
\underbrace{(1,0),\ldots,(1,2n-1)}_{i=1},\ldots,
\underbrace{(d-1,0),\ldots,(d-1,2n-d+1)}_{i=d-1}\}.
\end{align*}
It follows that
\begin{align*}
\Card \cC=\Card \mathcal E
&=2n+2n+(2n-1)+\ldots+(2n-d+2)\\
&=-1+\sum_{i=0}^{d-1} (2n+1-i)
=-1+\sum_{i=1}^{2n+1}i- \sum_{i=1}^{2n-d+1} i\\
&=-1+\frac{(2n+1)(2n+2)}{2}-\frac{(2n-d+1)(2n-d+2)}{2}\\
&=-1+\Card \cP_{2n}-\Card \cD,
\end{align*}
whence
$$
\Card \mathcal{B} 
= \Card \mathcal{C} + \Card \mathcal{D} 
=-1+\Card \cP_{2n},
$$
which shows that $\cB$ is a basis for $\ker L_\beta$.

The converse part is clear.
Namely, $L$ determines the sequence $\beta$ by
$\beta_{ij}=L(x^iy^j)$ for $0\leq i,j$, $i+j\leq 2n$.
(Note that by $f_{ij}\in \ker L$ for $0\le i < d $, $j\ge 0$, and $0< i+j\le 2n$,
for these indices \CBB{the} $\beta_{ij}$ are precisely \CBB{the }constant terms in 
\CB{the respective} $f_{ij}$.)
\CB{Recall from \eqref{def:evaluation-v2}--\eqref{def:evaluation} that for $q \in \cP_n$,  
$$q(X,Y) = 0 \;\Longleftrightarrow\; L_\beta(qx^iy^j) = 0 \quad \text{for all }i,j\geq 0,\; i+j\leq n.$$
}
\CB{Since}
$g_{kl}\in \ker L$ 
for $k,l\geq 0$, $k+l\leq 2n-d$, \CB{it now follows that }all \CBB{ of the relations of \eqref{relations}, as well as their linear combinations,} are 
column relations of $M_n(\beta)$.
\end{proof}

\CB{Returning to the computation of $V_1$}, suppose $p\in \ker L$ satisfies $p|\Gamma \ge 0$, i.e., $p(x,x^{d})\ge 0~\forall x\in \mathbb{R}$.
From Lemma \ref{Bbasis}, we may write 
\begin{equation}\label{FG}
p = F+G \equiv 
\displaystyle \sum_{\substack{ 0\le i<d,\; j\ge 0,\\ 0<i+j\le 2n }} a_{ij}f_{ij} + 
\displaystyle  \sum_{\substack{k,l\ge 0,\\k+l\le 2n-d }} b_{kl}g_{kl}.
\end{equation}
Since $p|\Gamma \ge 0$ and  $G|\Gamma \equiv 0$, then
\begin{equation}\label{Q}
 Q(x):= F(x,x^{d}) = \sum_{\substack{ 0\le i<d,\; j\ge 0,\\ 0<i+j\le 2n }} a_{ij}(x^{i+dj}-\beta_{ij})
\end{equation}
satisfies $Q(x)\ge 0~\forall
x\in \mathbb{R}$. Since $\deg Q \le 2nd$, there exist 
$$
\CB{\widehat{r}\equiv (r_{0},\ldots,r_{nd})\in\mathbb{R}^{nd+1},
\quad 
\widehat{s} \equiv (s_{0},
\ldots,s_{nd}) \in \mathbb{R}^{nd+1}}$$ 
\CB{such that}
\begin{align}
\label{def:R-S}
\begin{split}
&
R(x) := r_{0}+ r_{1}x + \cdots + r_{nd}x^{nd}
\quad\text{and}\quad
S(x) := s_{0}+ s_{1}x + \cdots + s_{nd}x^{nd}
\end{split}
\end{align}
\CB{satisfy}
\begin{equation}
    \label{eq:Q-R-S-v2}
        Q(x) = R(x)^{2} + S(x)^{2}.
\end{equation}
\CB{In the sequel (and moreso in Part 2 \cite{fz})
we will require detailed information about the coefficients of $F$, $R$ and $S$. By comparing coefficients on both sides of \eqref{eq:Q-R-S}, we see that each $a_{ij}$, which is the coefficient in $Q$ of $x^{i+dj}$, 
admits a unique expression
 as a homogeneous quadratic polynomial in the $r_{k}$ and $s_{l}$. 
Indeed,}
\begin{equation}
\label{eq:Q-R-S}
    \CBB{a_{ij}=
    h_{i,j}(\widehat{r},\widehat{s}):=
     \displaystyle \sum_{\substack{0\le k,l \le nd,\\
0< k+l = i+dj}} r_{k}r_{l} +s_{k}s_{l},\qquad
    i,j\geq 0,\; i<d,\;i+j\leq 2n.}
\end{equation}
\CB{Moreover, a comparison of the constant terms in \eqref{eq:Q-R-S-v2} gives}
\begin{equation}
\label{eq:cons-term}
    -\sum_{\substack{ 0\le i<d,\; j\ge 0,\\ 0<i+j\le 2n }} 
        a_{ij}\beta_{ij}=r_0^2+s_0^2.
\end{equation}
\CB{
Note also that if $i,j \geq 0$, $i < d$, $i+dj \leq 2nd$, but $i+j >2n$, then since there is 
no moment $\beta_{ij}$, the coefficient of $x^{i+dj}$ in $R(x)^2 + S(x)^2$ must be 0. 
Let $\cF$ denote the set of all such pairs $(i,j)$.
\CBB{It is convenient to extend the definition of $h_{i,j}$
in \eqref{eq:Q-R-S} to include these cases, together with the requirements} 
\begin{equation}
\label{eq:cons-terms}
    \CBB{
    0=h_{i,j}(\widehat{r},\widehat{s})
    \quad \text{whenever }(i,j)\in \cF.
    }
\end{equation}
\CBB{We call each such requirement an \textit{auxiliary requirement}.}
\CB{Also,} we introduce an arbitrary constant $A_{ij}$ for each $(i,j) \in F$
to denote the moment $\beta_{ij}$, which is not present in $\beta^{(2n)}$. 
\CBB{We refer to $A_{ij}$ as an \textit{auxiliary moment}.}
In the sequel (particularly in Part 2 \cite{fz}) we require the number and location of the $A_{ij}$. To this end, note that:
\begin{align}
\label{def:cF}
\begin{split}
\cF
&:= \{(i,j)\colon i,j \ge 0,\; i<d,\;i+dj \le 2nd,\;
    i+j >2n\}\\
&= \{(i,j)\colon 2n-(d-2)\leq j\leq 2n-1,\; 2n+1-j\leq i\leq d-1\}
=
\bigcup_{j=1}^{d-2}\cF_{j}
\end{split}
\end{align}
where each ${\cF}_j$ is equal to
$$
{\cF}_j
=
\left\{
\begin{array}{rl}
    \{(j+1,2n-j),\ldots,(d-1,2n-j)\},&
        \text{if }j+1\leq d-1,\\[0.3em]
        \emptyset,& \text{otherwise}.
\end{array}
\right.
$$
}
Hence, $\Card \cF=\sum_{i=1}^{d-2}i=\frac{(d-1)(d-2)}{2}$.
Note that $\mathcal{F}=\emptyset$ for $n=1,2$.

\begin{example}
\label{ex-n-d-3}
Let $n=d=3$. 
Then $Q$ (cf.\ \eqref{eq:Q-R-S}) is of the form
$$
Q(x)
= 
\sum_{\substack{ 0\le i<3,\; j\ge 0,\\ 0<i+j\le 6 }} a_{ij}(x^{i+3j}-\beta_{ij})
=:\sum_{\ell=0}^{18} q_\ell x^{\ell}\in \cP_{18}.
$$

\CB{To illustrate \eqref{eq:Q-R-S}, note that $q_4$, which is equal to $a_{11}$, may be expressed as}
\begin{align*}
\CB{
    h_{1,1}(\widehat{r},\widehat{s})}
    &\CB{=
r_{0}r_{4} + r_{1}r_{3} + r_{2}r_{2} + r_{3}r_{1}+ r_{4}r_{0} +s_{0}s_{4} + s_{1}s_{3} + s_{2}s_{2} + s_{3}s_{1}+ s_{4}s_{0}}\\ 
    &\CB{=2(r_{0}r_{4}+s_{0}s_{4}+r_{1}r_{3}+s_{1}s_{3})+ r_{2}^{2}+s_{2}^{2}.
    }
\end{align*}

Note that $\cF=\{(2,5)\}$,
since for $i=2$ and $j=5$, we have $i+3j = 17 < 2nd=18$, but $7=i+j >2n=6$. Thus $x^{17}$ does not appear
in $Q(x)$, so, from \eqref{eq:cons-terms}, using $ h_{2,5}(\widehat{r},\widehat{s}) = r_{8}r_{9} + s_{8}s_{9}$, it follows that 
$0=r_{8}r_{9} + s_{8}s_{9}=q_{17}$.
The auxiliary moment in this case is 
$\beta_{2,5}$,
which we denote by
$A_{2,5}$.

For $d=3$ and arbitrary $n\in \NN$, which we study in Section \ref{Section 4}, 
we have $\cF=\{(2,2n-1)\}$ and
the auxiliary requirement \CBB{(cf.\ \eqref{eq:cons-terms})}
is equal to
\begin{equation}\label{d3requirement}
0=h_{2,2n-1}(\widehat{r},\widehat{s}) = 2(r_{3n}r_{3n-1}  + s_{3n}s_{3n-1}),
\end{equation}
with the ``missing" monomial in $Q(x)$ being $x^{2+3(2n-1)}=x^{6n-1}$.
\end{example}

We next introduce the \textit{core matrix} $C\equiv C_{\beta}$; 
in the sequel
we show that positivity properties of $C$ determine the core variety of $\beta$.
Our immediate goal is to
use \CBB{\eqref{eq:Q-R-S}} and the core matrix
 to derive an inner product expression {(see \eqref{inner-product-expression})}
 which can be used to characterize whether
 \CBB{\eqref{eq:cons-term}} holds.
This will permit us to
 provide a sufficient condition for representing measures via the core variety. 
 
 \CBB{Let $\lfloor \cdot \rfloor$ denote the floor function. Namely, $\lfloor k\rfloor$ is the greatest integer not larger than $k$.}
 For $1\leq i,j\leq dn+1$, let
\begin{equation}\label{KL}
K_{ij}:= (i+j-2)\;\text{mod}\; d
\quad\text{and}\quad
 L_{ij} :=  \lfloor (i+j-2)/d \rfloor ,
\end{equation}
 The
core matrix, a
$(dn+1) \times (dn+1)$ matrix,
is defined by
\begin{equation}\label{J}  
C\equiv (C_{ij})_{i,j=1}^{dn+1}:=(\beta_{K_{ij},L_{ij}})_{i,j=1}^{dn+1}.
\end{equation}
However,  if 
$\beta_{K_{ij},L_{ij}}$ is
  an auxiliary moment because 
  $(K_{ij},L_{ij})\in \cF$,
  we redefine $\beta_{K_{ij},L_{ij}}$ as 
  $$\beta_{K_{ij},L_{ij}} := A_{K_{ij},L_{ij}},$$ 
where $A_{K_{ij},L_{ij}}$ is an arbitrary constant. 
To emphasize the dependence of $C$ on the choice of the
constants $A_{ij}$ for $(i,j)\in \cF$,  
we sometimes denote $C$ by 
$$C\equiv C[\{A_{ij}\}_{(i,j)\in \cF}].$$
  From (\ref{J}), $C$ is clearly a Hankel matrix.
\begin{example}
    \label{ex:core-4}
    For $n=d=4$ the core matrix 
    $$C\equiv C[\mathbf{A}_{3,2n-2}, \mathbf{A}_{2,2n-1}, \mathbf{A}_{3,2n-1}]$$
    is the following
    \begin{tiny}
    \begin{equation*}\label{Cmatrix-d-4}
        \begin{pmatrix}
                 \beta_{00} & \beta_{10} & \beta_{20} & \beta_{30} & \beta_{01}& \beta_{11} & \beta_{21} & \beta_{31} & \beta_{02} & \beta_{12} & \beta_{22} & \beta_{32} & \beta_{03} & \beta_{13} & \beta_{23} & \beta_{33} & \beta_{04}\\
            \beta_{10} & \beta_{20} & \beta_{30} & \beta_{01} & \beta_{11}& \beta_{21} & \beta_{31} & \beta_{02} & \beta_{12} & \beta_{22} & \beta_{32} & \beta_{03} & \beta_{13} & \beta_{23} & \beta_{33} & \beta_{04} & \beta_{14}  \\
            \beta_{20} & \beta_{30} & \beta_{01} & \beta_{11} & \beta_{21}& \beta_{31} & \beta_{02} & \beta_{12} & \beta_{22} & \beta_{32} & \beta_{03} & \beta_{13} & \beta_{23} & \beta_{33} & \beta_{04} & \beta_{14} & \beta_{24}  \\
            \beta_{30} & \beta_{01} & \beta_{11} & \beta_{21} & \beta_{31}& \beta_{02} & \beta_{12} & \beta_{22} & \beta_{32} & \beta_{03} & \beta_{13} & \beta_{23} & \beta_{33} & \beta_{04} & \beta_{14} & \beta_{24} & \beta_{34}  \\
            \beta_{01} & \beta_{11} & \beta_{21} & \beta_{31} & \beta_{02}& \beta_{12} & \beta_{22} & \beta_{32} & \beta_{03} & \beta_{13} & \beta_{23} & \beta_{33} & \beta_{04} & \beta_{14} & \beta_{24} & \beta_{34} & \beta_{05}  \\
            \beta_{11} & \beta_{21} & \beta_{31} & \beta_{02} & \beta_{12}& \beta_{22} & \beta_{32} & \beta_{03} & \beta_{13} & \beta_{23} & \beta_{33} & \beta_{04} & \beta_{14} & \beta_{24} & \beta_{34} & \beta_{05} & \beta_{15}  \\
            \beta_{21} & \beta_{31} & \beta_{02} & \beta_{12} & \beta_{22}& \beta_{32} & \beta_{03} & \beta_{13} & \beta_{23} & \beta_{33} & \beta_{04} & \beta_{14} & \beta_{24} & \beta_{34} & \beta_{05} & \beta_{15} & \beta_{25}  \\
            \beta_{31} & \beta_{02} & \beta_{12} & \beta_{22} & \beta_{32}& \beta_{03} & \beta_{13} & \beta_{23} & \beta_{33} & \beta_{04} & \beta_{14} & \beta_{24} & \beta_{34} & \beta_{05} & \beta_{15} & \beta_{25} & \beta_{35}  \\
            \beta_{02} & \beta_{12} & \beta_{22} & \beta_{32} & \beta_{03}& \beta_{13} & \beta_{23} & \beta_{33} & \beta_{04} & \beta_{14} & \beta_{24} & \beta_{34} & \beta_{05} & \beta_{15} & \beta_{25} & \beta_{35} & \beta_{06}  \\
            \beta_{12} & \beta_{22} & \beta_{32} & \beta_{03} & \beta_{13}& \beta_{23} & \beta_{33} & \beta_{04} & \beta_{14} & \beta_{24} & \beta_{34} & \beta_{05} & \beta_{15} & \beta_{25} & \beta_{35} & \beta_{06} & \beta_{16}  \\
            \beta_{22} & \beta_{32} & \beta_{03} & \beta_{13} & \beta_{23}& \beta_{33} & \beta_{04} & \beta_{14} & \beta_{24} & \beta_{34} & \beta_{05} & \beta_{15} & \beta_{25} & \beta_{35} & \beta_{06} & \beta_{16} & \beta_{26}  \\
            \beta_{32} & \beta_{03} & \beta_{13} & \beta_{23} & \beta_{33}& \beta_{04} & \beta_{14} & \beta_{24} & \beta_{34} & \beta_{05} & \beta_{15} & \beta_{25} & \beta_{35} & \beta_{06} & \beta_{16} & \beta_{26} & \mathbf{A}{36}  \\
            \beta_{03} & \beta_{13} & \beta_{23} & \beta_{33} & \beta_{04}& \beta_{14} & \beta_{24} & \beta_{34} & \beta_{05} & \beta_{15} & \beta_{25} & \beta_{35} & \beta_{06} & \beta_{16} & \beta_{26} & \mathbf{A}{36} & \beta_{07}  \\
            \beta_{13} & \beta_{23} & \beta_{33} & \beta_{04} & \beta_{14}& \beta_{24} & \beta_{34} & \beta_{05} & \beta_{15} & \beta_{25} & \beta_{35} & \beta_{06} & \beta_{16} & \beta_{26} & \mathbf{A}{36} & \beta_{07} & \beta_{17}  \\
            \beta_{23} & \beta_{33} & \beta_{04} & \beta_{14} & \beta_{24}& \beta_{34} & \beta_{05} & \beta_{15} & \beta_{25} & \beta_{35} & \beta_{06} & \beta_{16} & \beta_{26} & \mathbf{A}{36} & \beta_{07} & \beta_{17} & \mathbf{A}{27}  \\
            \beta_{33} & \beta_{04} & \beta_{14} & \beta_{24} & \beta_{34}& \beta_{05} & \beta_{15} & \beta_{25} & \beta_{35} & \beta_{06} & \beta_{16} & \beta_{26} & \mathbf{A}{36} & \beta_{07} & \beta_{17} & \mathbf{A}{27} & \mathbf{A}{37}  \\
            \beta_{04} & \beta_{14} & \beta_{24} & \beta_{34} & \beta_{05}& \beta_{15} & \beta_{25} & \beta_{35} & \beta_{06} & \beta_{16} & \beta_{26} & \mathbf{A}{36} & \beta_{07} & \beta_{17} & \mathbf{A}{27} & \mathbf{A}{37} & \beta_{08}     
        \end{pmatrix}
\end{equation*}
    \end{tiny}

\medskip

The rows and columns of $C$
are indexed by the ordered set
    \begin{align*}
        &\{\textit{1},X,X^2,X^3,Y,XY,X^2Y,X^3Y, \ldots,Y^k,XY^k,X^2Y^k,X^3Y^k,\ldots,\\
        &\hspace{2cm}   
        Y^{n-1},XY^{n-1},X^2Y^{n-1},X^3Y^{n-1},Y^n\}.
    \end{align*}
Note that the columns $X^3Y^{n-2},X^2Y^{n-1},X^3Y^{n-1}$ are not among \CBB{the} columns of $M_n$ but \CB{rather} of its extension $M_{n+2}$.
So these columns are auxiliary ones in $C$ and contain auxiliary moments. 
\end{example}

The next two results provide an alternate description of the core matrix in terms of
moment matrix extensions.
Let $d\geq 2$ and \CB{let} $M_{n+d-2}$ be some recursively generated extension of 
the positive $(y-x^{d})$-pure moment matrix $M_n$.
Let $\widetilde \beta\equiv \widetilde \beta^{(2n+2d-4)}$ be the extended sequence and 
let
$L_{\widetilde \beta}:\cP_{2(n+d-2)}\to \RR$  be the corresponding Riesz functional.
Define the 
ordered
set of monomials
\begin{align}
\label{def:set-M}
\begin{split}
    \mathcal M
    &:=\{1,x,\ldots,x^{d-1},y,xy,\ldots,x^{d-1}y,
    \ldots, y^{i},xy^{i},\ldots,x^{d-1}y^{i},\\
    &\hspace{3cm} y^{n-1},xy^{n-1},\ldots,x^{d-1}y^{n-1},y^n
\},
\end{split}
\end{align}
and the vector space   
\begin{align}
\label{def:vector-spaces}
\begin{split}
\mathcal U
    &:=
    \text{Span}\;
    \{
    s\colon s\in \mathcal M
    \}\subset \cP_{n+d-2},\\
\end{split}
\end{align}

We next define an $(nd+1)\times (nd+1)$ matrix
 $M[\widetilde\beta,\mathcal U]$ 
 with rows and columns 
indexed by the monomials in $\mathcal{M}$ in the order
\begin{equation}
\label{def:order-M}
\textit{1},X,\ldots,X^{d-1},Y,XY,\ldots,X^{d-1}Y,\ldots
    Y^{n-1},XY^{n-1},\ldots,X^{d-1}Y^{n-1},Y^n
\end{equation}
i.e., 
for $1\leq k\leq nd+1$, the $k$-th element of this order is equal to $X^{I_k}Y^{J_k}$ where
\begin{equation}
\label{def:Ik-Jk}
    I_k:=(k-1)\;\text{mod}\; d
    \quad \text{and}\quad 
    J_k:=\lfloor (k-1)/d\rfloor.
\end{equation}
The
 $(i,j)$-th entry of   $M[\widetilde\beta,\mathcal U]$ is  defined to be
      equal to 
\begin{equation}
    \label{eq-3}
    L_{\widetilde \beta}(x^{I_i+I_j}y^{J_i+J_j})
    =
    \widetilde\beta_{I_i+I_j,J_i+J_j}.
\end{equation}

More generally,
for $r,s \in \mathcal U$ (cf.\ \eqref{def:vector-spaces}), with coefficient vectors $\widehat{r},
\widehat{s}$ relative to the 
ordered basis of monomials in $\mathcal M$
(cf.\ \eqref{def:set-M}), we have 
\begin{equation}
\label{Riesz-to-MM-correspondence}
    \big\langle M[\widetilde \beta,\mathcal U]\;\widehat{r},\widehat{s}\big\rangle
        := L_{\widetilde \beta}(rs).
\end{equation}

\begin{lem}
\label{Riesz-simplified}
For $1\leq i,j\leq nd+1$ the following holds:
\begin{equation}
\label{rg-rel}
    L_{\widetilde \beta}(x^{I_i+I_j}y^{J_i+J_j})=
    \widetilde \beta_{K_{ij},L_{ij}},
\end{equation}
where $K_{ij}$, $L_{ij}$ are as in \eqref{KL}.
\end{lem}

\begin{proof}
We have that
\begin{align}
\label{eq}
\begin{split}
    K_{ij}+dL_{ij}
    &=
    i+j-2
    =I_{i}+I_j+d(J_i+J_j),
\end{split}
\end{align}
where in the second equality we used $i+j-2=(i-1)+(j-1)$.
We separate two cases according to the value of the sum $I_i+I_j$:\\

\noindent \textbf{Case a):} 
$\CB{I_i}+I_j<d$.
Then \eqref{eq} implies that
$K_{ij}=I_i+I_j$ and $L_{ij}=J_i+J_j$.
Using this
in \eqref{eq-3}, \eqref{rg-rel} follows.\\

\noindent \textbf{Case b):} 
$\CB{I_i}+I_j\geq d$.
Then \eqref{eq} implies that
\begin{equation}
    \label{Kij-Lij}
        K_{ij}=I_i+I_j-d\quad \text{and} \quad L_{ij}=J_i+J_j+1.
\end{equation}
Since $M_{n+d-2}$ is recursively generated, we have $X^{r+d}Y^{s} =  X^{r}Y^{s+1}$ in the
rows and columns, and therefore $\widetilde{\beta}_{r+d,s} = \widetilde{\beta}_{r,s+1}$.
The assumption of Case b), and \eqref{Kij-Lij} used in \eqref{eq-3}, together with $M_{n+d-2}$ being recursively generated, therefore
 imply that
\begin{align*}
\widetilde{\beta}_{I_i+I_j,J_i+J_j}
&= \widetilde{\beta}_{
    K_{ij}+d,J_{i}+J_j}
=\widetilde{\beta}_{
    K_{ij},J_{i}+J_j+1}
\underbrace{=}_{\eqref{Kij-Lij}}\widetilde{\beta}_{
    K_{ij},L_{ij}}.
\end{align*} 
proving \eqref{rg-rel}.
\end{proof}

\begin{proposition}
\label{prop:core-sub}
Assume the notation above. Then:
\begin{enumerate}[leftmargin=*]
    \item\label{prop:core-sub-pt1} 
        If the sequence $\widetilde \beta$ has a representing measure, then 
        $M[\widetilde \beta,\mathcal U]$ is positive semidefinite.
    \item\label{prop:core-sub-pt2}
        Let
        $\widetilde M[\widetilde \beta,\mathcal U]$ be obtained from  
        $M[\widetilde \beta,\mathcal U]$ 
        by
        replacing each $\widetilde\beta_{ij}$ 
        satisfying
        $i\;\text{mod}\;d+j+\lfloor\frac{i}{d}\rfloor>2n$ with the auxiliary moment $A_{ij}$. Then 
            $$C[\{A_{ij}\}_{(i,j)\in \cF}]=\widetilde M[\widetilde \beta,\mathcal U].$$
\end{enumerate}
\end{proposition}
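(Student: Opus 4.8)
The plan is to prove the two statements essentially independently: the first is a routine Gram‑matrix positivity argument, and the second is an entrywise comparison resting on Lemma~\ref{Riesz-simplified}.

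For the first statement, I would begin by recording that $\mathcal M$ consists of $nd+1$ pairwise distinct monomials, hence is linearly independent in $\cP_{n+d-2}$, so that $M[\widetilde\beta,\mathcal U]$ is well defined and, by \eqref{Riesz-to-MM-correspondence}, equals the Gram matrix of the symmetric bilinear form $(r,s)\mapsto L_{\widetilde\beta}(rs)$ on $\mathcal U$ relative to the ordered basis $\mathcal M$ (equivalently, $M[\widetilde\beta,\mathcal U]$ is the principal submatrix of $M_{n+d-2}(\widetilde\beta)$ on the rows and columns indexed by $\mathcal M$). If $\nu$ is a representing measure for $\widetilde\beta$ and $r\in\mathcal U\subseteq\cP_{n+d-2}$, then $r^{2}\in\cP_{2(n+d-2)}$ lies in the domain of $L_{\widetilde\beta}$, whence
\[
\big\langle M[\widetilde\beta,\mathcal U]\,\widehat r,\widehat r\big\rangle
= L_{\widetilde\beta}(r^{2})
= \int r^{2}\,d\nu \ge 0 ,
\]
so $M[\widetilde\beta,\mathcal U]$ is positive semidefinite.

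For the second statement, both $C$ and $M[\widetilde\beta,\mathcal U]$ are $(nd+1)\times(nd+1)$ Hankel matrices, so it suffices to compare their $(i,j)$-entries for $1\le i,j\le nd+1$. By Lemma~\ref{Riesz-simplified}, the $(i,j)$-entry of $M[\widetilde\beta,\mathcal U]$ equals $\widetilde\beta_{K_{ij},L_{ij}}$, where $K_{ij}=(i+j-2)\;\text{mod}\;d$ and $L_{ij}=\lfloor(i+j-2)/d\rfloor$ as in \eqref{KL}; since this representative already has first index $K_{ij}<d$, the replacement rule defining $\widetilde M[\widetilde\beta,\mathcal U]$ (replace $\widetilde\beta_{pq}$ by $A_{pq}$ exactly when $p\;\text{mod}\;d+q+\lfloor p/d\rfloor>2n$) acts in position $(i,j)$ precisely when $K_{ij}+L_{ij}>2n$. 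On the other hand, by \eqref{J} and \eqref{KL} the $(i,j)$-entry of $C$ is $\beta_{K_{ij},L_{ij}}$, which by convention denotes the auxiliary constant $A_{K_{ij},L_{ij}}$ exactly when $(K_{ij},L_{ij})\in\cF$.

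It then remains to reconcile the two descriptions. If $K_{ij}+L_{ij}\le 2n$, then $(K_{ij},L_{ij})$ indexes a genuine moment of $\beta^{(2n)}$, so $\widetilde\beta_{K_{ij},L_{ij}}=\beta_{K_{ij},L_{ij}}$ because $\widetilde\beta$ extends $\beta$, and neither matrix uses an auxiliary moment in this position; the entries coincide. If $K_{ij}+L_{ij}>2n$, then $(K_{ij},L_{ij})\in\cF$: indeed $0\le K_{ij}<d$ and $0\le L_{ij}$ by construction, and $K_{ij}+dL_{ij}=i+j-2\le 2(nd+1)-2=2nd$, and these are exactly the requirements for membership in $\cF$ recalled at the start of the proof of Lemma~\ref{tildak-tildal} (equivalently, \eqref{def:cF}); hence both $\widetilde M[\widetilde\beta,\mathcal U]$ and $C$ carry $A_{K_{ij},L_{ij}}$ in this position and the entries again coincide. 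This yields $C=\widetilde M[\widetilde\beta,\mathcal U]$. I do not anticipate a substantive obstacle: the only delicate point is the alignment of the two ``auxiliary moment'' conditions, which is why I would first normalize the entries of $M[\widetilde\beta,\mathcal U]$ to canonical form via Lemma~\ref{Riesz-simplified} and then invoke only the elementary bound $i+j-2\le 2nd$.
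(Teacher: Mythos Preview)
Your proof is correct and follows essentially the same approach as the paper: part~\eqref{prop:core-sub-pt1} is identical, and part~\eqref{prop:core-sub-pt2} rests on Lemma~\ref{Riesz-simplified} together with the definition \eqref{J} of $C$. Your explicit verification that $K_{ij}+L_{ij}>2n$ forces $(K_{ij},L_{ij})\in\cF$ via the bound $K_{ij}+dL_{ij}=i+j-2\le 2nd$ is a helpful detail that the paper leaves implicit, while the paper in turn spells out why the replacement condition in the statement is phrased as $i\;\text{mod}\;d+j+\lfloor i/d\rfloor>2n$ rather than simply $i+j>2n$ (namely, entries of $M[\widetilde\beta,\mathcal U]$ as originally defined may have first index up to $2d-2$, so one must first reduce via recursive generation before testing); these are complementary elaborations of the same argument.
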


\begin{proof}
Part \eqref{prop:core-sub-pt1} follows from the equality \eqref{Riesz-to-MM-correspondence} and $L_{\widetilde\beta}(r^2)=\int r^2d\mu\geq 0$, \CB{where $\mu$ is a representing measure for $\beta$}.  
For part \eqref{prop:core-sub-pt2} 
first note that not all $\widetilde\beta_{ij}$ with $i+j>2n$ are auxiliary moments. By recursive generation we have 
    $\widetilde\beta_{ij}=\widetilde\beta_{i-d,j+1}$ if $d\leq i<2d-1$ (observe that $i$ is at most $2d-2$)
    and so $\widetilde\beta_{ij}$ is auxiliary only if $i\;\text{mod}\;d+j+\lfloor\frac{i}{d}\rfloor=i-d+j+1>2n$ in these cases.
    If $i<d$, then the condition $i\;\text{mod}\;d+j+\lfloor\frac{i}{d}\rfloor>2n$
    reduces to $i+j>2n$. Now part \eqref{prop:core-sub-pt2} 
follows from \eqref{J}
and Lemma \ref{Riesz-simplified}.
\end{proof}

 If $H\equiv {(h_{i+j-1})}_{i,j=1}^m$ is any $m \times m$ Hankel matrix
and $\widehat{t}:= (t_{1},\ldots,t_{m})\in \mathbb{R}^{m}$, then 
\begin{equation}\label{hankel}  \langle H\widehat{t},\widehat{t} \rangle
=
\displaystyle \sum_{i=1}^{m} \displaystyle \sum_{j=1}^{m} t_{i}  h_{i+j-1}t_{j}
=
\displaystyle \sum_{k=1}^{2m-1}
\Big(h_{k} \cdot \displaystyle \sum_{\substack{1\le i,j \le m,\\ i+j=k+1}} t_{i}t_{j}\Big).
\end{equation}

\begin{lem}\label{Jrr} 
{Let
$\widehat{r}\equiv (r_{0},\ldots,r_{nd})\in \mathbb{R}^{nd+1}$,
$\widehat{s} \equiv (s_{0},
\ldots,s_{nd}) \in \mathbb{R}^{nd+1}$
satisfy \eqref{eq:cons-terms}.
For $i,j\geq 0$, with $i<d$ and {$0<i+j\leq 2n$}, define $a_{ij}$ by \eqref{eq:Q-R-S}.
Then}
\begin{equation}
\label{inner-product-expression}
\CBB{\langle C\widehat{r}, \widehat{r}\rangle + \langle C\widehat{s}, \widehat{s}\rangle=0
\quad
\Longleftrightarrow
\quad
\eqref{eq:cons-term}\; \text{holds}.}
\end{equation}
\end{lem}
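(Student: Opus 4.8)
The plan is to compute $\langle C\widehat r,\widehat r\rangle+\langle C\widehat s,\widehat s\rangle$ in closed form via the Hankel-expansion identity \eqref{hankel}, and then to observe that under hypothesis \eqref{requirement} the result is exactly $r_0^2+s_0^2+\sum_{i,j} a_{ij}\beta_{ij}$, which makes the claimed equivalence with \eqref{eq:cons-terms} immediate.

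\textbf{Step 1 (Hankel expansion).} Since $C$ is the $(nd+1)\times(nd+1)$ Hankel matrix with $C_{pq}=\beta_{(p+q-2)\bmod d,\,\lfloor (p+q-2)/d\rfloor}$, I apply \eqref{hankel} with $m=nd+1$ and the identification $t_p=r_{p-1}$. Writing $\ell$ for the shifted index (so $\ell=(p-1)+(q-1)$ runs over $0,\dots,2nd$), this yields
$$\langle C\widehat r,\widehat r\rangle=\sum_{\ell=0}^{2nd}\beta_{\ell\bmod d,\,\lfloor \ell/d\rfloor}\sum_{\substack{0\le i,j\le nd\\ i+j=\ell}}r_ir_j,$$
where $\beta_{\ell\bmod d,\lfloor \ell/d\rfloor}$ is read as the auxiliary constant $A_{\ell\bmod d,\lfloor \ell/d\rfloor}$ exactly when $(\ell\bmod d,\lfloor \ell/d\rfloor)\in\cF$, in accordance with the definition \eqref{J}. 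The same identity holds with $r$ replaced by $s$.

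\textbf{Step 2 (identify the inner sums).} For $\ell=0$ the inner sum is $r_0^2$ (resp.\ $s_0^2$) and $\beta_{00}=1$. For $\ell\ge 1$ set $i_0:=\ell\bmod d$ and $j_0:=\lfloor \ell/d\rfloor$, so $0\le i_0<d$ and $\ell=i_0+dj_0>0$; then adding the two contributions, $\sum_{i+j=\ell}(r_ir_j+s_is_j)=h_{i_0,j_0}(\widehat r,\widehat s)$ by \eqref{aij}. Hence
$$\langle C\widehat r,\widehat r\rangle+\langle C\widehat s,\widehat s\rangle=(r_0^2+s_0^2)+\sum_{\ell=1}^{2nd}\beta_{\ell\bmod d,\lfloor \ell/d\rfloor}\,h_{\ell\bmod d,\lfloor \ell/d\rfloor}(\widehat r,\widehat s).$$

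\textbf{Step 3 (split the range and use \eqref{requirement}).} As $\ell$ runs over $1,\dots,2nd$, the pair $(i_0,j_0)=(\ell\bmod d,\lfloor \ell/d\rfloor)$ runs bijectively over all $(i,j)$ with $i,j\ge 0$, $i<d$, $0<i+dj\le 2nd$; such a pair satisfies either $i+j\le 2n$ (then $\beta_{ij}$ is a genuine moment and $h_{ij}(\widehat r,\widehat s)=a_{ij}$ by \eqref{aij}) or $i+j>2n$ (then $(i,j)\in\cF$, $\beta_{ij}=A_{ij}$, and $h_{ij}(\widehat r,\widehat s)=0$ by \eqref{requirement}). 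The $\cF$-terms drop, leaving
$$\langle C\widehat r,\widehat r\rangle+\langle C\widehat s,\widehat s\rangle=(r_0^2+s_0^2)+\sum_{\substack{0\le i<d,\ j\ge 0\\ 0<i+j\le 2n}}a_{ij}\beta_{ij},$$
so the left side vanishes precisely when $-\sum a_{ij}\beta_{ij}=r_0^2+s_0^2$, which is \eqref{eq:cons-terms}.

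\textbf{Main obstacle.} There is no conceptual difficulty; the only care needed is the index bookkeeping in Step 3 — checking that the pairs $(\ell\bmod d,\lfloor \ell/d\rfloor)$ for $1\le \ell\le 2nd$ are exactly the disjoint union of the admissible index set of \eqref{aij} (those with $0<i+j\le 2n$, which automatically satisfy $i+dj\le 2nd$) and $\cF$, and that the genuine/auxiliary dichotomy of the entries of $C$ matches the non-$\cF$/$\cF$ split, so that \eqref{requirement} is precisely what annihilates the extra terms.
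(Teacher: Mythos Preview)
Your proof is correct and follows essentially the same approach as the paper's: both apply the Hankel expansion \eqref{hankel} to $C$, identify the coefficient of each $\beta_{\ell\bmod d,\lfloor\ell/d\rfloor}$ with $h_{i,j}(\widehat r,\widehat s)$, split into genuine moments versus $\cF$-indices, and use \eqref{requirement} to kill the latter. The paper indexes by $k=1,\dots,2nd+1$ where you use $\ell=0,\dots,2nd$, but the computations are otherwise identical.
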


\begin{proof}
Let $I_k$, $J_k$ be as in \eqref{def:Ik-Jk}.
Further, let
\begin{equation}
\label{def:h_k}
h_k:=
\left\{
\begin{array}{rl}
    \beta_{I_k,J_k},&    \text{if }I_k+J_k\leq 2n,\\[0.2em]
    A_{I_k,J_k},&    \text{if }I_k+J_k> 2n.
\end{array}
\right.
\end{equation}
We now apply (\ref{hankel}) with $m = nd+1$, $H=C$
with $h_k$ as in \eqref{def:h_k},
and with $t_{p} = r_{p-1}$ or  $t_{p} = s_{p-1}$ ($1\le p \le nd+1$): 
\begin{align*}
&\langle C\widehat{r}, \widehat{r} \rangle + \langle C\widehat{s}, \widehat{s} \rangle=\\
&= \displaystyle \sum_{k=1}^{2nd+1} 
\Big(h_k \cdot\displaystyle \sum_{\substack{1\le p,q \le nd+1,\\ p+q = k+1}} (r_{p-1}r_{q-1}
+ s_{p-1}s_{q-1})\Big)\\
&= 
r_0^2+s_0^2+
\displaystyle 
\sum_{\substack{ 0\le i<d,\; j\ge 0,\\ 0<i+j\le 2n }}
\Big(\beta_{ij}\cdot\displaystyle \sum_{\substack{0\le p,q \le nd,\\0<p+q =i+dj}} 
(r_{p}r_{q}
+ s_{p}s_{q})\Big)
+\\
&\hspace{3cm}+
\sum_{\substack{ 0\le i<d,\; j\ge 0,\\ i+j> 2n }}
\Big(A_{ij}\cdot\displaystyle \sum_{\substack{0\le p,q \le nd,\\0<p+q =i+dj}} 
(r_{p}r_{q}
+ s_{p}s_{q})\Big)
\\
&=r_0^2+s_0^2 
+\sum_{\substack{ 0\le i<d,\; j\ge 0,\\ 0<i+j\le 2n }}
a_{ij}\beta_{ij}
\end{align*}
where we used 
the assumption of the lemma in the last equality.
Now the equivalence of the lemma easily follows.
\end{proof}

\begin{rem}\label{reverse} 
It is important for the sequel to note that 
the implication ($\Leftarrow$) of Lemma \ref{Jrr}
may be used
in order to construct elements $p$ of $\ker L$ satisfying $p|\Gamma \ge 0$,
so that $\mathcal{CV}(L) \subseteq \mathcal{Z}(p|\Gamma)$.
For suppose
$\widehat{r},~\widehat{s}\in \mathbb{R}^{nd+1}$ satisfy $h_{i,j}(\widehat{r},\widehat{s})=0$
for \CB{every} $(i,j)\in \cF$ and 
$\langle C\widehat{r}, \widehat{r}\rangle + \langle C\widehat{s}, \widehat{s}\rangle = 0$.
Now define $a_{ij} = h_{ij}(\widehat{r}, \widehat{s})$ ($i,j \ge 0$, $i<d$, $0<i+j\le 2n$).
Then $p:= \sum a_{ij}f_{ij}\in \ker L$ satisfies $p(x,x^{d}) = R(x)^{2}+S(x)^{2}$, where
$R(x) := r_{0}+ r_{1}x + \cdots + r_{nd}x^{dn}$
 and $S(x) := s_{0}+ s_{1}x + \cdots + s_{nd}x^{dn}$.
Now we have 
$\mathcal{CV}(L) \subseteq \{(x,x^{d}):  R(x) = S(x) =0\}$ and
$\Card \mathcal{CV}(L) \le \min\{ \deg R, \deg S\}$.
\end{rem}
\bigskip

Let $A\equiv\{A_{ij}\}_{(i,j)\in \cF}$ with $A_{ij}\in \RR$.
We say that the core matrix $C[A]$ is \textit{recursively generated} if
for every $v\in \RR^{nd}$ satisfying
$\begin{pmatrix} v \\ 0 \end{pmatrix}\in \ker C[A],$
it follows that
$\begin{pmatrix} 0 \\ v \end{pmatrix}\in \ker C[A].$\\

\begin{remark} 
\label{remark-rg}
Note that the definition above is equivalent to the definition of a 
``recursively generated'' Hankel matrix given in \cite{cf1991}. However, it does not encompass the notion of recursiveness for a general multivariable moment matrix given in item iv) preceding Theorem \ref{degree2}. 
\end{remark}

Let $0_{k\times 1}\in \RR^k$ stand for a zero column vector. By Remark \ref{remark-rg} and properties of recursively generated Hankel matrices \cite{cf1991}, for every singular, recursively generated $C[A]$ there exists $r\leq nd+1$ and a vector $v:=(v_i)_{i=1}^r\in \RR^r$ \CB{with $v_r\neq 0$,} such that  
$$
\ker C[A]
=
\operatorname{span}\Big\{
\begin{pmatrix} v \\ 0_{(nd+1-r)\times 1} \end{pmatrix},
\begin{pmatrix} 0\\ v \\ 0_{(nd-r)\times 1} \end{pmatrix},\ldots,
\begin{pmatrix} 0_{(nd+1-r)\times 1} \\ v \end{pmatrix}
\Big\}.
$$
If \CB{we normalize $v$ so that }$v_r=1$, then $v$ is uniquely determined. We call this unique $v$ \CB{the} \textit{generating kernel vector} of $C[A]$.

Because $y-x^d$ is irreducible, the core variety is either the entire curve or a finite set of points in the curve.
{The following theorem characterizes the existence of a representing measure for $\beta$ in terms of the existence of auxiliary moments such that the core matrix is positive and recursively generated. It also characterizes the type of core variety in terms of positive completions \CB{of} the core matrix.}

\begin{theorem}\label{Jmeasure}
{
Let $\beta\equiv \beta^{(2n)}$ be a given sequence such that
$M_n\equiv M_{n}(\beta)$ is positive semidefinite 
and $(y-x^{d})$-pure.
Let $\Gamma := \mathcal{Z}(y-x^{d})$.
The following statements are equivalent:
\begin{enumerate}[leftmargin=*]
\item $\beta$  admits a representing measure (necessarily supported in $\Gamma$).
\smallskip
\item $\beta$  admits a finitely atomic representing measure (necessarily supported in $\Gamma$).
\smallskip
\item\label{Jmeasure-pt3} There exist auxiliary moments $A\equiv \{A_{ij}\}_{(i,j)\in \cF}$, such
that the core matrix $C[A] \equiv C[\{A_{ij}\}_{(i,j)\in \cF}]$ is positive semidefinite and recursively generated.
\end{enumerate}
}
\smallskip

Moreover, if $\beta$ has a representing measure, the core variety coincides with $\Gamma$
if and only if there is some choice of auxiliary moments $A$ such that $C[A]$ is positive definite. 
Further, the following are equivalent:
\begin{enumerate}[leftmargin=*]
\setcounter{enumi}{3}
\item
\label{Jmeasure-pt4} 
$\mathcal C\mathcal V(L)$ is a nonempty finite subset of $\Gamma$.
\smallskip
\item
\label{Jmeasure-pt5}
$\beta$ has a unique representing measure, which is necessarily finitely atomic.
\smallskip
\item
\label{Jmeasure-pt6}
There is a unique positive semidefinite, recursively generated completion $C[A]$, which is necessarily singular.
\end{enumerate}
\end{theorem}
\begin{proof}
The equivalence $(i) \Leftrightarrow (ii)$ follows from Richter's Theorem \cite{Ric}
(or by Theorem \ref{cvt}).

Next we establish the implication $(ii) \Rightarrow (iii)$.
Suppose $M_{n}(\beta)$ is $(y-x^{d})$-pure and that $\beta$ has a finitely atomic representing measure $\mu$ supported in $\Gamma$.
Thus, $\mu$ is of the form
\begin{equation} 
    \label{f-a-r-m}
    \mu = \displaystyle \sum_{k=1}^{m} a_{k}\delta_{(x_{k},y_{k})},
\end{equation}
where $m>0$, each $a_{k}>0$, and $y_{k}=x_{k}^{d}$ for each $k$. 
Since $\mu$ has moments
of all orders, we may consider  the moment matrix $M_{n+t}[\mu]$, 
containing $\mu$-moments up to degree $2n+2t$,
 where $t= \left \lceil \frac{d-2}{2} \right \rceil$.
 \CBBB{Here, $\lceil \cdot \rceil$ denotes the ceiling function, i.e., the smallest integer greater than or equal to its argument.}
 Using the moment data $\widetilde{\beta}^{(2(n+t))}$ from $M_{n+t}[\mu]$,
i.e., $\widetilde{\beta_{ij}} = \int x^{i}y^{j}d\mu$, ($i,j\ge 0,~i+j\le 2(n+t)$), let
\begin{equation}
\label{univariate-corresponding-sequence}
\gamma_{p} = \widetilde{\beta}_{p\;\text{mod}\; d,\lfloor \frac{p}{d} \rfloor} ~~(0\le p\le 2nd).
\end{equation}
Since $M_{n}[\mu] = M_{n}(\beta)$, we have
$$\gamma_{p} = \beta_{p\;\text{mod}\; d,\lfloor \frac{p}{d} \rfloor}
\quad {\rm if}~0\le p\le 2nd
\quad\text{and}\quad 
p\;\text{mod}\; d +\lfloor \frac{p}{d} \rfloor \le 2n.$$
We next show that 
\begin{equation}
    \label{f-a-r-m-uni}
        \widetilde{\mu} := \displaystyle \sum_{k=1}^{m} a_{k}\delta_{x_{k}}
\end{equation}
is a representing measure  for $\gamma := \{\gamma_{p}\}_{0\le p\le  2nd}$. Indeed, 
for $0\le p \le 2nd$ we have
$$\sum a_{k}x_{k}^{p} = \sum a_{k}x_{k}^{p\;\text{mod}\; d+d \lfloor \frac{p}{d} \rfloor}
= \sum  a_{k}x_{k}^{p\;\text{mod}\;d}y_{k}^{\lfloor \frac{p}{d} \rfloor}
=\widetilde{\beta}_{p\;\text{mod}\; d,\lfloor \frac{p}{d} \rfloor} = \gamma_{p}.$$
It now follows that the moment matrix for $\gamma$, which is the Hankel matrix $H(\gamma)\equiv (\gamma_{i+j})_{0\le i,j \le nd} $,
 is positive semidefinite {and recursively generated} (cf.\ Section \ref{sec:prel}). If, in the core matrix $C[A]$, for each $(i,j) \in \mathcal{F}$ we set $A_{ij} = \gamma_{i+dj} = \widetilde{\beta}_{ij}$, then $C[A]$ coincides with $H(\gamma)$, and is thus positive semidefinite {and recursively generated}. This is precisely $(iii)$.

 {
 Next we establish the implication $(iii) \Rightarrow (ii)$.
 Suppose there exist auxiliary moments $A$ such that $C[A]$ 
 is positive semidefinite and recursively generated.
 We will prove that $\beta$ has a finitely atomic representing measure.
 Define a univariate sequence $\gamma\equiv\{\gamma_{p}\}_{0\leq p\leq 2nd}$
 as in \eqref{univariate-corresponding-sequence} above, where $\widetilde\beta_{ij}$
 is either $\beta_{ij}$ or $A_{ij}$. Since the Hankel matrix
 $H(\gamma)\equiv (\gamma_{i+j})_{0\le i,j \le nd}$ coincides with $C[A]$ (by definition of $\gamma$), it follows that it is positive semidefinite and recursively generated. By \cite[Theorem 3.9]{cf1991}, $\gamma$ has a finitely atomic representing measure 
 $\widetilde{\mu} := \displaystyle \sum_{k=1}^{m} a_{k}\delta_{x_{k}}$. But then 
 $ \mu = \displaystyle \sum_{k=1}^{m} a_{k}\delta_{(x_{k},y_{k})}$ is a representing measure for $\beta$.
 Indeed, for $0\leq i,j\leq 2n, i+j\leq 2n$ we have
 \begin{align*}
 \sum a_{k}x_{k}^{i}y_k^j 
 &=
 \sum a_{k}x_{k}^{i+dj}
 =
 \gamma_{i+dj}
 =
 \beta_{i\;\text{mod}\; d,j+\lfloor \frac{i}{d} \rfloor}
 =
 \beta_{i,j},
\end{align*}
where in the last equality we used that $\beta_{r+d,s}=\beta_{r,s+1}$ for $0\leq r,s$ such that $r+s+d\leq 2n$. 

 It remains to address the \CB{core variety}. First assume that  $C[A]$ is positive definite for some choice of auxiliary moments $A$.}
Concerning the core variety of $L \equiv L_{\beta}$, we have $V_{0} =\mathcal{V}(M_{n}) = \Gamma$, and we now
consider $V_{1} := \mathcal{Z}(p\in \ker L\colon p|{V_{0}} \ge 0)$.
For $p\in \ker L$ with $ p|V_{0} \ge 0$, we have $p = F+G$ as in (\ref{FG}). 
The \CB{discussion following the proof of Lemma \ref{Bbasis}} shows that $Q(x) := F(x,x^{d})$ satisfies
 $Q(x) = R(x)^{2} + S(x)^{2}$, where $\widehat{r}$ and $\widehat{s}$ satisfy the conditions
of \CBB{(\ref{eq:Q-R-S}), (\ref{eq:cons-term}) and (\ref{eq:cons-terms})}.  Lemma \ref{Jrr} now shows that 
$\langle C\widehat{r}, \widehat{r}\rangle + \langle C\widehat{s}, \widehat{s}\rangle = 0$,
and
since $C$ is positive definite, it follows that $\widehat{r} = \widehat{s} = 0$. 
Thus \CBB{(\ref{eq:Q-R-S})} implies that each $a_{ij} = 0$, so $F =0$. Since $\mathcal{Z}(G|\Gamma) = \Gamma$, we now have $\mathcal{Z}(p|\Gamma)
=\Gamma$. It follows that $V_{1} = V_{0} = \Gamma$, so $\mathcal{CV}(L) = \Gamma$ and the 
Core Variety Theorem implies that $\beta$ has finitely atomic representing measures whose union of supports is $\Gamma$.

Assume next that $\mathcal{C}\mathcal{V}(L)=\Gamma$. 
\CBBB{We need to prove that there exists a choice of auxiliary moments $A$ such that $C[A]$ is positive definite. 
  We first show that if there exist distinct completions $C[A_1]$ and $C[A_2]$ that are positive semidefinite, recursively generated and \textit{singular}, then there is a
positive definite completion $C[A]$.}
\bigskip

\noindent\textbf{Claim.}
\( C\!\left[\tfrac{1}{2}A_1 + \tfrac{1}{2}A_2\right] \) is positive definite.

\medskip
\noindent\textit{Proof.}
We have
\[
C\!\left[\tfrac{1}{2}A_1 + \tfrac{1}{2}A_2\right]
  = \tfrac{1}{2} C[A_1] + \tfrac{1}{2} C[A_2].
\]
Since all three matrices are positive semidefinite, it follows that
\begin{equation}
\label{kernel-equality}
\ker\!\left(C\!\left[\tfrac{1}{2}A_1 + \tfrac{1}{2}A_2\right]\right)
   = \ker(C[A_1]) \cap \ker(C[A_2]).
\end{equation}

Assume that \( C\!\left[\tfrac{1}{2}A_1 + \tfrac{1}{2}A_2\right] \) is not positive definite.
Let \( v \in \mathbb{R}^r \), \( r \le nd+1 \), be its generating kernel vector.  
By \eqref{kernel-equality}, the vector
\[
u :=
\begin{pmatrix}
    0_{(nd+1-r)\times 1}\\ v
\end{pmatrix}
\]
lies in 
\(
\ker(C[A_1]) \cap \ker(C[A_2]).
\)
Now examine the last column of each of the matrices
\( C[A_1] \), \( C[A_2] \), 
\( C\!\left[\tfrac{1}{2}A_1 + \tfrac{1}{2}A_2\right] \),
proceeding from the top to the bottom row.
At the first occurrence of an auxiliary moment, the corresponding entry must be identical
in all three matrices, because \( u \) is a common kernel vector and the other entries in the row of the auxiliray moment coincide in all three matrices.
Proceeding to the second auxiliary moment, we again conclude (using \CB{the Hankel structures and} that the first auxiliary \CB{moments} already coincide) that this entry must also agree in all three matrices.
Continuing inductively, we find that all auxiliary moments coincide, i.e.,
\( A_1 = A_2 \).
This contradicts the assumption \( A_1 \neq A_2 \), completing the proof of the claim.\\

\CBBB{
Now let $\mu_1$ be a finitely atomic representing measure for $\beta$ as given by \eqref{f-a-r-m}. As in the proof of  $(ii) \Rightarrow (iii)$, we associate to $\mu_1$ the univariate sequence $\gamma_1$ with Hankel matrix $H(\gamma_1)$ and representing measure $\widetilde\mu_1$ as in \eqref{f-a-r-m-uni}, and use these to define the positive semidefinite and recursively generated completion $C[A_1] := H(\gamma_1)$. 
If $C[A_1]$ is positive definite, we are done, so we may assume $C[A_1]$ is singular, in which case $\gamma_1$ has a unique representing measure
by \cite[Theorem 3.10]{cf1991}, namely $\widetilde\mu_1$. Since $\mathcal C\mathcal V(L) =\Gamma$, there exists a finitely atomic representing measure $\mu_2$ for $\beta$ that is distinct from 
$\mu_1$. As above,
we may associate to $\mu_2$ its univariate sequence $\gamma_2$ with Hankel matrix $H(\gamma_2)$ and representing measure $\widetilde \mu_2$, and use these to define the positive semidefinite and recursively generated completion $C[A_2] := H(\gamma_2)$. 
If $C[A_2]$ is positive definite, we are done, so we may assume $C[A_2]$ is singular.
Now, if $C[A_1]$ and $C[A_2]$ are distinct we may apply the Claim to conclude that there is a positive definite completion, as desired. So we may assume that 
$C[A_2] = C[A_1]$, whence $H(\gamma_2) = H(\gamma_1)$. Since $H(\gamma_1)$ has the unique representing measure $\widetilde\mu_1$ and $\widetilde\mu_2$ is a representing measure for $H(\gamma_2)$, we have $\widetilde\mu_2 = \widetilde\mu_1$. It follows readily that $\mu_2 = \mu_1$, a contradiction.
Thus $C[A_1]$ and $C[A_2]$ are distinct, which implies that there is a positive definite completion $C[A]$.

The equivalences among $(iv), (v), (vi)$ follow directly from the reasoning above.
}
\end{proof} 
\CBBB{
\begin{remark}
Note that if there is a positive definite completion $C[A]$,
there may also be positive semidefinite, recursively generated but singular completions. Thus, for $d =3$,
the set of $A$ for which $C[A]$ is positive definite forms an open interval,  and at the interval endpoints the completions are positive semidefinite, recursively generated, but singular. 
\end{remark}
}

The rest of this paper and its sequel \cite{fz} are primarily devoted to developing \textit{concrete} conditions for the existence or nonexistence of auxiliary moments satisfying condition \eqref{Jmeasure-pt3} of Theorem \ref{Jmeasure}. We conclude this section with examples which illustrate cases where the core variety is either the entire curve $y = x^d$ or is empty.
These examples suggest the following question.

\begin{quest}\label{q1}
Let $\beta \equiv \beta^{(2n)}$ be such that $M_n(\beta)$ is positive semidefinite and 
$(y-x^d)$-pure. Is it possible for $\beta$ to have a unique representing measure (cf.\ conditions \eqref{Jmeasure-pt4}-\eqref{Jmeasure-pt6} of Theorem \ref{Jmeasure})?
\end{quest}

In Example \ref{d=1,2} (just below) we show that the answer is negative for 
$d = 1$ and $d = 2$. In Section \ref{Section 4}
we prove a negative answer for $d = 3$. 
This provides a new proof of Theorem \ref{f1thm}. Nevertheless, for $d = 4$
we can establish a positive answer. 
The example illustrating this is beyond the scope of this note, as it requires special techniques, but will appear in \cite{fz}.\\

In the sequel, for $M_{n}\succeq 0$ and $(y-x^{d})$-pure, we denote by
$\widehat{M_{n}}$ the central compression of $M_{n}$ obtained by deleting all rows
and columns $X^{d+p}Y^{q}$ ($p,q\ge 0$, $p+q\le n-d$). The number of rows and columns in
$\widehat{M_{n}}$ is thus $\dim \mathcal{P}_{n} - \dim \mathcal{P}_{n-d} = \frac{d(2n-d+3)}{2}$.
Since $M_{n}$ is positive and $(y-x^{d})$-pure, 
it follows immediately that  $\widehat{M_{n}}$ is positive definite
and 
\begin{equation} 
    \label{rank-of-pure-Mn}
        \Rank M_{n} = \Rank \widehat{M_{n}} = \frac{d(2n-d+3)}{2}.
\end{equation}

\begin{example}\label{d=1,2}
i) For $d=1$, we have 
$C = \widehat{M_{n}} = 
\begin{pmatrix}
     \beta_{0,i+j-2} 
\end{pmatrix}_{1\le i,j\le n+1}\succ 0$, 
so the existence of representing measures whose union of
supports is the line $y=x$ now follows from
Theorem \ref{Jmeasure}. Alternately, using flat extensions, the existence of measures in this case
 follows from
the solution to the  truncated moment problem on a line in \cite{cf2002}.
\smallskip

\noindent ii) For $d=2$, the core matrix $C$ for $M_{n}$ is $(2n+1)\times (2n+1)$, 
with 
\begin{equation}\label{d=2} C_{ij} = \beta_{(i+j-2)\;\text{mod}\; 2,\lfloor \frac{i+j-2}{2}\rfloor}.
\end{equation}
In $\widehat{M_n}$, column $j$ is the truncation to $\widehat{M_n}$ of
column $X^{(j-1)\;\text{mod}\; 2}Y^{\lfloor (j-1)/2 \rfloor}$ in $M_n$. 
Likewise, row $i$ of $\widehat{M_n}$
 is the truncation to $\widehat{M_n}$ of
row $X^{(i-1)\;\text{mod}\; 2}Y^{\lfloor (i-1)/2 \rfloor}$ in $M_n$. 
Thus, using the structure of moment matries,
we have 
\begin{equation}\label{mhatij}
\widehat{M}_{ij} = \beta_{(i-1)\;\text{mod}\; 2 +(j-1)\;\text{mod}\; 2, \lfloor (i-1)/2 \rfloor+\lfloor (j-1)/2 \rfloor}.
\end{equation}
By Proposition \ref{prop:core-sub} (or using calculations based on (\ref{d=2}) and \eqref{mhatij}), we have $C=\widehat{M_n}\succ 0$.
Since  $C$ is positive definite,
Theorem \ref{Jmeasure} now
implies that $\beta$ has representing measures whose union of supports is the parabola $y = x^{2}$. 
The existence of representing measures
also follows from the solution to the Parabolic Truncated Moment Problem in
\cite{cf2004}, based on flat extensions. 
\end{example}

\begin{remark} 
\label{rem:ort-equiv}
Note that for $d\ge 3$, 
$C \equiv 
C[\{A_{ij}\}_{(i,j)\in \cF}]$ does not coincide with $\widehat{M_n}$.
However, its central compression
$\widehat{C}$, obtained by deleting row $k$ and
column $k$ from $C$ in those cases where row $k$
ends with an auxiliary moment, is orthogonally equivalent to $\widehat{M_n}$, \CB{and is therefore positive definite}. The details of this will appear in \cite{fz}.

Here we only explain the case $d=3$, since this will be needed in the next section to provide a core \CB{variety-based} solution to the $(y-x^3)$-pure TMP.  The core matrix $C$ for $M_{n}$ is $(3n+1)\times (3n+1)$ 
with 
\begin{equation}\label{d=3} C_{ij} = \beta_{(i+j-2)\;\text{mod}\; 3,\lfloor \frac{i+j-2}{3}\rfloor}.
\end{equation}
Let $\widehat{C}$ be a $3n\times 3n$ principal submatrix of $C$ obtained by deleting
$nd$-th row and column.
Recall that the rows and columns of $\widehat{M_{n}}$, which is also of size $3n\times 3n$, 
are labelled in degree-lexicographic order, 
\begin{equation*} 
    1, X,Y,X^2,XY,\CB{Y^2},X^2Y,XY^2,Y^3,
\ldots, X^{2}Y^{n-2},XY^{n-1},Y^{n}
\end{equation*}
(there is no row or column $X^{i}Y^{j}$ with $i\ge 3$).
Let us permute these to the order
\begin{equation}
\label{new-order}
    1, X,X^2,Y,XY,X^2Y,Y^2,XY^2,X^2Y^2,
    \ldots,Y^{n-1},XY^{n-1},Y^{n}.
\end{equation}
Then there exists a permutation matrix $U$ of size $3n\times 3n$
such that 
    $U^T \widehat{M}_nU$
has rows and columns indexed by \eqref{new-order}.
Note that 
\begin{align}
    \label{ort-equiv-central}
    \begin{split}
    (U^T \widehat{M}_nU)_{ij}
    &=
    \beta_{(i-1)\;\text{mod}\; 3 +(j-1)\;\text{mod}\; 3, \lfloor (i-1)/3 \rfloor+\lfloor (j-1)/3 \rfloor}\\
    &= \beta_{(i+j-2)\;\text{mod}\; 3,\lfloor \frac{i+j-2}{3}\rfloor}.
    \end{split}
\end{align}
where we used Lemma \ref{Riesz-simplified} in the second equality.
By \eqref{d=3} and \eqref{ort-equiv-central}, $\widehat C$ is orthogonally equivalent to $\widehat{M}_n$.
\end{remark}

We conclude this section with some examples that illustrate Theorem \ref{Jmeasure}
for a positive semidefinite $(y-x^{d})$-pure $M_{n}(\beta)$.
 Let $\widehat{C}$ denote the compression of 
$C\equiv C[A]$ obtained by deleting each row and each column of $C$ that ends in some 
auxiliary moment $A_{ij}$. 
In the sequel, for $1\le k\le dn+1$, $C_{k}$ denotes the compression of $C$ to the first $k$
rows and columns.

\begin{ex}
\label{ex:y=x3}
Consider the  moment matrix
\begin{equation}\label{matrix2}
M_{3}(\beta)
 = \begin{tiny}    \left( \begin{array}{cccccccccc}
1 & 0 & 0 & 1 & 2 & 5 & 0 & 0 & 0 & 0\\%
0 & 1 & 2 & 0 & 0 & 0 & 2 & 5 & 14 & 42\\%
0 & 2 & 5 & 0 & 0 & 0 & 5 & 14 & 42 & 132\\%
1 & 0 & 0 & 2 & 5 & 14 & 0 & 0 & 0 & 0\\%
2 & 0 & 0 & 5 & 14 & 42 & 0 & 0 & 0 & 0 \\%
5 & 0 & 0 & 14 & 42 & 132 & 0 & 0 & 0 & 0 \\%
0 & 2 & 5 & 0 & 0 & 0 & 5 & 14 & 42 & 132 \\%
0 & 5 & 14 & 0 & 0 & 0 & 14 & 42 & 132 & 429 \\%
0 & 14 & 42 & 0 & 0 & 0 & 42 & 132 & 429 & s \\%
0 & 42 & 132 & 0 & 0 & 0 & 132 & 429 & s& t%
  \end{array}\right).\end{tiny}
\end{equation}
A calculation with nested determinants shows that
$M_{3}$ is positive semidefinite and $(y-x^{3})$-pure if and only if $s\equiv  \beta_{15}$
and $t\equiv \beta_{06}$ satisfy
\begin{equation}\label{extest} t > s^{2}-2844s+2026881.
\end{equation}
The core matrix is
\begin{equation}\label{matrix3}
C[A] = \begin{tiny}    \left( \begin{array}{cccccccccc}
1 & 0 & 1 & 0 & 2 & 0 & 5 & 0  & 14 & 0\\
0 & 1 & 0 & 2 & 0 & 5 & 0 & 14 & 0 & 42\\
1 & 0 & 2 & 0 & 5 & 0  & 14 & 0 & 42 & 0\\
0 & 2 & 0 & 5 & 0  & 14 & 0 & 42 & 0 & 132\\
2 & 0 & 5 & 0  & 14 & 0 & 42 & 0 & 132 & 0\\
0 & 5 & 0  & 14 & 0 & 42 & 0 & 132 & 0 & 429\\
5 & 0  & 14 & 0 & 42 & 0 & 132 & 0 & 429 & 0\\
0 & 14 & 0 & 42 & 0 & 132 & 0 & 429 & 0 & s\\
14 & 0 & 42 & 0 & 132 & 0 & 429 & 0 & s & A\\
0 & 42 & 0 & 132 & 0 & 429 & 0 & s & A & t
 \end{array}\right).\end{tiny}
\end{equation}
\newline i) Let $s=1430$ and $t=4862$, so (\ref{extest}) is satisfied.
Calculations with nested determinants show that $C_{9}\succ 0$, and therefore
a calculation of $\det C[A]$ shows that
$C[A]\succ 0$ if and only if $-1 <A<1$.
   Theorem \ref{Jmeasure} now shows that
 $\beta$ has  
representing measures and  that $\mathcal{CV}(L_{\beta})$ is the curve $y = x^{3}$.
\smallskip

\noindent ii) Consider next $s=1422$, $t=4798$. Condition 
(\ref{extest}) is satisfied and nested determinants show that
$\widehat{C}
 \succ 0$. In particular, $C_{8}\succ 0$, but we have 
$\det C_{9} = -7$, so for no value of $A$ will $C[A]$ be positive
semidefinite. By Theorem \ref{Jmeasure}, $\beta$
has no measure.
\smallskip

\noindent iii)
 Now let
$s= 1429$, $t=4847$. Then $(\ref{extest})$ holds,
  and we have
 $C_{8}\succ 0$;
 however,
 $\det
C_{9}=
0$, so there exists $x \in \mathbb{R}^{9}$ such that
$C_{9}x = 0$. Now $\widehat{r}:= (x^{t},0)\equiv
  (r_{0},\ldots,r_{8},0)$
satisfies  $\langle C\widehat{r}, \widehat{r} \rangle =0$ and, with $\widehat{s}\equiv 0$, 
also satisfies the consistency
 requirement
$r_{8}r_{9} +s_{8}s_{9} =0$ (cf. (\ref{d3requirement})).
Remark \ref{reverse} now implies that there exists $p\in \ker L_{\beta}$ such that $Q(x):= p(x,x^{3}) =
r(x)^{2}$.
Therefore,
 $\Card \mathcal{CV}(L)\le \deg r
\le 8 < 9 = \Rank
M_{3}$, so $\mathcal{CV}(L_{\beta}) = \emptyset$ by Corollary \ref{coretest},
and thus $\beta$ has no measure. \hfill $\triangle$
\end{ex}

In next section we will prove 
that the method of the preceding  example applies to
any positive semidefinite $M_{n}(\beta)$ that is $(y-x^{3})$-pure.
\CB{We may therefore formulate one solution to the $(y-x^3)$-pure truncated moment problem as follows (see Corollary \ref{thm:one-formulation-v2} and its proof).}

\begin{theorem}   
\label{thm:one-formulation}
 Suppose $M_{n}(\beta)$ is
positive semidefinite and
 $(y-x^{3})$-pure. Then $\beta$ has a representing measure if and only if
$\det C_{3n} > 0$, in which case $\mathcal{CV}(L_{\beta})$ is the curve $y=x^{3}$.
\end{theorem}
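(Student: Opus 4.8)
The plan is to derive the equivalence from Theorem~\ref{Jmeasure} together with the fact, to be proved in the next section, that $\widehat{C}\succ 0$ (Theorem~\ref{Jtilde}), in the spirit of the three cases of Example~\ref{ex:y=x3}. For $d=3$ one has $\cF=\{(2,2n-1)\}$, so there is a single auxiliary moment $A:=A_{2,2n-1}$, occurring in $C[A]$ only in the two symmetric positions $(3n,3n+1)$ and $(3n+1,3n)$ (the pairs $(i,j)$ with $1\le i,j\le 3n+1$ and $i+j-2=6n-1$). Consequently the compression $C_{3n}$, and, for $d=3$, also $\widehat{C}$ (which is $C$ with row and column $3n$ deleted, since row/column $3n$ is the only one that ``ends in'' an auxiliary moment), are genuine matrices not involving $A$. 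By Theorem~\ref{Jtilde} we have $\widehat{C}\succ 0$, hence its principal submatrix $C_{3n-1}$ is positive definite; also $\Rank M_{n}=3n$ by \eqref{rank-of-pure-Mn}. I would then treat the two implications separately.

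\emph{Sufficiency.} Suppose $\det C_{3n}>0$. Since the leading principal minors $\det C_{1},\dots,\det C_{3n-1}$ are positive and $\det C_{3n}>0$, Jacobi's criterion gives $C_{3n}\succ 0$. Next I would write $C[A]$ in $(3n)+1$ block form with leading block $C_{3n}$, bordering column $v(A)$, and corner $\beta_{0,2n}$; a direct check shows $v(A)=(u^{T},A)^{T}$ with $u\in\RR^{3n-1}$ a fixed vector of moments of $\beta$, and that $\widehat{C}=\bigl(\begin{smallmatrix}C_{3n-1}&u\\ u^{T}&\beta_{0,2n}\end{smallmatrix}\bigr)$ has the \emph{same} $u$ as bordering column and the same corner $\beta_{0,2n}$, while $C_{3n-1}$ is the leading block of $C_{3n}$. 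By the Schur complement formula,
$$\det C[A]=\det C_{3n}\,\bigl(\beta_{0,2n}-v(A)^{T}C_{3n}^{-1}v(A)\bigr),$$
and the standard bordered-inverse identity, applied to $C_{3n}$ over its leading block $C_{3n-1}$, shows that $A\mapsto v(A)^{T}C_{3n}^{-1}v(A)$ is a strictly convex quadratic whose infimum equals $u^{T}C_{3n-1}^{-1}u$. Applying the same identity to $\widehat{C}\succ 0$ forces $\beta_{0,2n}>u^{T}C_{3n-1}^{-1}u$. Hence $A\mapsto\det C[A]$ is a downward parabola with positive vertex value, so $\det C[A]>0$, and therefore (Jacobi again) $C[A]\succ 0$, for all $A$ in a nonempty open interval. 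The ``Moreover'' clause of Theorem~\ref{Jmeasure} then produces finitely atomic representing measures whose union of supports is $\Gamma=\mathcal{Z}(y-x^{3})$; by Theorem~\ref{cvt} this union equals $\mathcal{CV}(L_{\beta})$, so $\mathcal{CV}(L_{\beta})=\Gamma$.

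\emph{Necessity.} Suppose $\beta$ has a representing measure $\mu$. By Theorem~\ref{Jmeasure} there is $A$ with $C[A]\succeq 0$, hence $C_{3n}\succeq 0$, and it remains to exclude $\det C_{3n}=0$. If $\det C_{3n}=0$, then, since $C_{3n-1}\succ 0$, there is a nonzero $\xi=(\xi_{0},\dots,\xi_{3n-1})\in\ker C_{3n}$. Putting $\widehat{r}=(\xi_{0},\dots,\xi_{3n-1},0)\in\RR^{3n+1}$ and $\widehat{s}=0$, one has $\langle C[A]\widehat{r},\widehat{r}\rangle+\langle C[A]\widehat{s},\widehat{s}\rangle=\langle C_{3n}\xi,\xi\rangle=0$, and \eqref{d3requirement} holds because $r_{3n}=0$. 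By Remark~\ref{reverse} there is $p\in\ker L_{\beta}$ with $p(x,x^{3})=R(x)^{2}$, where $R(x)=\xi_{0}+\dots+\xi_{3n-1}x^{3n-1}\not\equiv 0$; thus $\mathcal{CV}(L_{\beta})\subseteq V_{1}\subseteq\{(x,x^{3})\colon R(x)=0\}$, a set of at most $3n-1$ points, contradicting $\Card\mathcal{CV}(L_{\beta})\ge\Card\supp\mu\ge\Rank M_{n}=3n$ (Proposition~\ref{coreprop} and Corollary~\ref{coretest}). Hence $\det C_{3n}\ne 0$, and with $C_{3n}\succeq 0$ this gives $\det C_{3n}>0$.

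The step I expect to be the main obstacle is the bookkeeping in the sufficiency part: checking that the sole $A$-dependence of $C[A]$ sits in the single Hankel parameter at positions $(3n,3n+1)$ and $(3n+1,3n)$; organizing the nested block decompositions so that the leading $3n-1$ block of $C_{3n}$ is $C_{3n-1}$ and the bordering column of $\widehat{C}$ is the truncation of $v(A)$; and recognizing that the inequality $\beta_{0,2n}>u^{T}C_{3n-1}^{-1}u$ furnished by $\widehat{C}\succ 0$ is precisely the statement that $\beta_{0,2n}$ exceeds the minimum over $A$ of the Schur term $v(A)^{T}C_{3n}^{-1}v(A)$, which is what makes $\det C[A]$ positive somewhere. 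Every other ingredient---Theorem~\ref{Jmeasure}, the forthcoming $\widehat{C}\succ 0$, and the core-variety count---is already available and is used exactly as in Example~\ref{ex:y=x3}(iii).
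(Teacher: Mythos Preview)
Your proposal is correct and follows essentially the same approach the paper has in mind: the paper does not give a separate proof of this theorem but instead states that, once $\widehat{C}\succ 0$ (hence $C_{3n-1}\succ 0$) is established in Theorem~\ref{Jtilde}, ``the method of the preceding example applies'' (i.e., the three cases of Example~\ref{ex:y=x3}). Your sufficiency argument is a faithful general version of Example~\ref{ex:y=x3}(i) (the Schur-complement computation showing $\min_A v(A)^T C_{3n}^{-1}v(A)=u^T C_{3n-1}^{-1}u<\beta_{0,2n}$ is exactly the mechanism behind the numerical interval for $A$ found there), and your necessity argument merges cases (ii) and (iii) cleanly by first invoking Theorem~\ref{Jmeasure}(iii) to get $C_{3n}\succeq 0$ and then ruling out $\det C_{3n}=0$ via Remark~\ref{reverse} and Corollary~\ref{coretest}, just as in Example~\ref{ex:y=x3}(iii).
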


\begin{ex}
\label{ex:y=x4}
Consider next the sequence $\beta^{(8)}$,
with $M_{3}$ given by
\begin{tiny}
\begin{equation*}\label{M4ex}
     \left( \begin{array}{cccccccccc}
 1& 0& 2& 1& 0 &14& 0& 5& 0& 132\\
 0& 1&0& 0 &5 &0 &2& 0& 42& 0\\
 2 &0& 14& 5& 0& 132& 0& 42& 0&  1430\\
 1& 0& 5& 2& 0& 42& 0& 14& 0&  429\\
 0& 5 &0 &0& 42& 0& 14 &0& 429 &0\\
 14& 0 &132& 42& 0 &1430&0 &429& 0 &16796\\
 0& 2& 0& 0& 14& 0& 5&  0& 132& 0\\
 5& 0&  42 & 14& 0& 429& 0  &132& 0& 4862\\
0 & 42 & 0 & 0 & 429 & 0 & 132 &  0 & 4862 & 0\\
132& 0& 1430& 429& 0& 16796& 0 &4862 &0 &208012
\end{array}
\right)
\end{equation*}
\end{tiny}
and the degree 7 and degree 8 blocks given by
\begin{tiny}
\begin{equation*}\label{M4example}
     \left( \begin{array}{ccccc}
0& 42& 0& 1430 &0\\
 42& 0 & 1430& 0 & 58786\\
 0 &1430& 0& 58786& 0\\
 1430 &0 & 58786& 0 & 2674440
\end{array}
\right)
\end{equation*}
\end{tiny}
\begin{tiny}
\begin{equation*}\label{M4example}
     \left( \begin{array}{ccccc}
14 &0 &429& 0 &16796\\
 0 &429& 0 &16796& 0\\
 429 &0 &16796 &0 & 742900\\
 0& 16796 &0& 742900& 0\\
  16796 &0 & 742900& 0 &353576708
\end{array}
\right)
\end{equation*}
\end{tiny}
The core matrix is a Hankel matrix (see Example \ref{ex:core-4})
    with anti-diagonals completely determined in the first row by
\begin{tiny}
\begin{align*}
\beta_{00} &= 1 ,  
    & \beta_{01}&=2, 
        &\beta_{02} &= 14, 
           &\beta_{03} &=132, \\
\beta_{10} &= 0,   
    & \beta_{11}&= 0, 
        &\beta_{12} &=0, 
            &\beta_{13} &= 0, \\
\beta_{20} &= 1, 
    & \beta_{21}&= 5, 
        &\beta_{22} &= 42, 
            &\beta_{23} &=429, \\
\beta_{30} &=0, 
    & \beta_{31}&= 0, 
        &\beta_{32} &=0, 
            &\beta_{33} &= 0,   
\end{align*}
\end{tiny}
and the last column by
\begin{tiny}
\begin{align*}
\beta_{04} &= 1430,  
    & \beta_{05}&=16796 
        &\beta_{06} &= 208012, 
        &\beta_{07}&=2674440,\\
\beta_{14} &= 0,   
    & \beta_{15}&= 0, 
        &\beta_{16} &=0, 
        &\beta_{17}&=0,\\
\beta_{24} &= 4862, 
    & \beta_{25}&= 58786, 
        &\beta_{26} &= 742900,
        &\beta_{27}&=A_{27},
        \\
\beta_{34} &=0, 
    & \beta_{35}&= 0, 
        &\beta_{36} &=A_{36},
        &\beta_{37} &=A_{37}\\
&&&&&&\beta_{08} &=353576708.
\end{align*}
\end{tiny}
It is straightforward to verify that
$M_{4}$ is positive semidefinite and $(y-x^{4})$--pure.
Using nested determinants, it is easy to show that $C_{14}\succ 0$. A further calculation
shows that $C_{15}\succ 0$ if and only if $-1 < A_{36}<1$. Setting $A_{36}=0$, we see that
$C_{16}\succ 0$ if and only if $A_{27} = 9694844+f$ for $f>0$. Now 
$\det C
= f(318219068-28f-f^{2})-A_{37}^{2}$, so there exists $A_{37}$ such that $C[A]\succ 0$ if and only
if 
$0 < f < 96\sqrt{34529}-14~(\approx 17824.7)$. In this case, since $C[A]\succ 0$, the core variety coincides with
the curve $y=x^{4}$.
\end{ex}

\begin{ex}
\label{ex:y=x4-v2}
Consider next the sequence $\beta^{(8)}$,
defined as in Example \ref{ex:y=x4}, except for the following 
5 differences:
\begin{align*}
\beta_{25}&=0, 
&\beta_{06}&=3454708516
&\beta_{26}&=3448894372,
&\beta_{07}&=0,
\end{align*}
$$\beta_{08}=2640503382173370698906776695725.$$
It is straightforward to verify that
$M_{4}$ is positive semidefinite and $(y-x^{4})$--pure.
Moreover, $C[A]$ can never be positive semidefinite, since $\beta_{25}=0$ is its 12th diagonal element, but there are nonzero entries in the 12th row and column. 
By the converse in Theorem \ref{Jmeasure},
$\beta^{(8)}$ does not admit a representing measure.
\end{ex}
\medskip

\setcounter{equation}{1}
\section{\label{Section 4} The  $(y-x^{3})$-pure truncated moment problem.}

In this section we apply the previous results to the moment problem for $\beta \equiv \beta^{(2n)}$
where $M_{n}$ is positive semidefinite and  $(y-x^{3})$-pure.
In particular, Theorem \ref{y=x3}  provides a positive answer to Question 
\ref{q1}
for $d = 3$.
Let $\Gamma$ stand for the curve $y=x^3$.
 Note that in the core matrix $C$, since $Y=X^{d}$ with $d=3$, there is exactly $1$ auxiliary moment, namely $\beta_{2,2n-1}$, which
we denote by $A \equiv A_{2,2n-1}$ (cf.\ Example \ref{ex-n-d-3}).
Let $\widehat{C}$ be \CB{the} principal submatrix of $C$ obtained by deleting
row and column $nd$. 
\CB{Recall from Remark \ref{rem:ort-equiv} that} $\widehat C\succ 0$.
 Let $H\equiv H[A]$ denote the matrix obtained from $C\equiv C[A]$
by interchanging rows and columns $nd$ and $nd + 1$ (the last $2$ rows and columns), so that $H$ is orthogonally equivalent to $C$, i.e., 
\begin{equation}
\label{H-permutation}
    H = P^TCP,
\end{equation}
where $P$ is a permutation matrix defined on the standard orthonormal basis $e_1,\ldots,e_{nd+1}$ for $\RR^{nd+1}$ by 
$$Pe_i=\left\{
\begin{array}{rl}
e_i,&   i\leq nd-1,\\
e_{nd+1},&  i=nd,\\
e_{nd},& i=nd+1.
\end{array}
\right.$$
We may thus represent $H$ as
\begin{equation}
\label{Hmatrix}
H=      \left( \begin{array}{cc}
\widehat{C} & v\\
v^{t} & \beta_{1,2n-1}
  \end{array}\right), 
\end{equation}
with $\widehat{C}\succ 0$ and where $v$ is of the form
\begin{equation}\label{v}
v=      \left( \begin{array}{c}
h \\
A
  \end{array}\right).
\end{equation}
(Here $h\in \mathbb{R}^{dn-1}$ and $v^{t}$ denotes the row vector transpose of $v$.)
{As in Section \ref{Section 2},
for $1\le j\le dn+1$, let $C_{j}$ denote the compression of $C$ to the first $j$
rows and columns.}
Write 
\begin{equation}
\label{def:widehat-C}
\widehat{C}=
\begin{pmatrix}
\CBB{C_{dn-1}} & z\\
z^t & \beta_{0,2n}
\end{pmatrix},
\end{equation}
where 
$z\in \RR^{dn-1}$ 
is of the form 
$$z = \begin{pmatrix}
    k\\\beta_{1,2n-1}
\end{pmatrix}$$ 
for some $k\in \RR^{dn-2}$.
We now have
\begin{equation}\label{H[A]}
H[A]= \left( \begin{array}{ccc}
\CBB{C_{dn-1}} & z & h\\
z^{t} & \beta_{0,2n} & A \\
h^{t} & A & \beta_{1,2n-1}
  \end{array}\right). 
\end{equation}

Since $\widehat{C}\succ 0$, $\widehat{C}^{-1}$ 
exists and has the form
\begin{equation}\label{Jhatinverse}
\widehat{C}^{-1}=      
\left( \begin{array}{cc}
\mathcal{C} & w\\
w^{t} & \epsilon
  \end{array}\right), 
\end{equation}
where (see e.g., \cite[p.\ 3144]{f1}) 
\begin{align}
\label{expr:inverse}
\begin{split}
\epsilon
&=\frac{1}{\beta_{0,2n}-z^t\CBB{C_{dn-1}^{-1}}z}>0,\\
w
&=-\epsilon \CBB{C_{dn-1}^{-1}}z\in \mathbb{R}^{dn-1},\\
\mathcal{C}
&=\CBB{C_{dn-1}^{-1}}(1+\epsilon zz^t\CBB{C_{dn-1}^{-1}})
\in \RR^{(dn-1)\times (dn-1)}.
\end{split}
\end{align}
Now 
$$\widehat{C}^{-1}v = \left( \begin{array}{c}
 \mathcal{C}h + Aw \\
w^{t}h +A\epsilon
  \end{array}\right),$$
and we set 
\begin{equation}\label{A}A\equiv A_{0}:= -\frac{w^{t}h}{\epsilon},
\end{equation}
 so that
\begin{equation}\label{kervector}
\widehat{C}^{-1}v = \left( \begin{array}{c}
 \mathcal{C}h -\frac{w^th}{\epsilon}w \\
0
  \end{array}\right).
\end{equation}
With this value of $A$ in $C$, and thus also in $v$,
let 
\begin{align}\label{rho}
\begin{split}
\phi 
:= v^{t}\widehat{C}^{-1}v
&= h^{t}\mathcal{C}h     -\frac{w^{t}h  h^{t}w}{\epsilon}\\
&=(h^t\CBB{C_{dn-1}^{-1}}h
+\epsilon h^tC_1^{-1}zz^t\CBB{C_{dn-1}^{-1}}h)-
\epsilon z^t\CBB{C_{dn-1}^{-1}}hh^t\CBB{C_{dn-1}^{-1}}z\\
&=h^t\CBB{C_{dn-1}^{-1}}h,
\end{split}
\end{align}
where we used \eqref{expr:inverse} in the second equality.

To emphasize the dependence of $\phi$ on $\beta$,  we sometimes denote
$\phi$ as $\phi[\beta]$. In Example \ref{poslinfuncl} (below) we will use the fact that $\phi$
is independent of $\beta_{1,2n-1}$ and $\beta_{0,2n}$. To see this,
note that $\beta_{1,2n-1}$ is an element of vectors $z$  and $z^{t}$, so (\ref{H[A]}) shows
that $\CBB{C_{dn-1}}$ and $h$ are independent of $\beta_{1,2n-1}$ and $\beta_{0,2n}$. It now follows from
\eqref{rho} that $\phi$
is independent of $\beta_{1,2n-1}$ and $\beta_{0,2n}$ as well. Thus, if $\widetilde{\beta}^{(2n)}$ has the property
that $M_{n}(\widetilde{\beta})$ is positive semidefinite and $(y-x^{3})$-pure, and if
$\beta_{ij} = \widetilde{\beta}_{ij}$ for all $(i,j) \not =(1,2n-1)$ and 
$(i,j)\neq (0,2n)$, then
$\phi[\widetilde{\beta}] = \phi[\beta]$.
Note that $\phi$ would depend on $\beta_{1,2n-1}$ and $\beta_{0,2n}$ if $A_0$
in \eqref{A} was chosen differently. This is due to the fact that the last row of $\widehat{C}^{-1}v$ in \eqref{kervector} would be non-zero.

\begin{theorem}\label{y=x3} 
Suppose $M_{n}$ is positive semidefinite and $(y-x^{3})$-pure. 
$\beta\equiv \beta^{(2n)}$ has a representing measure if and only if $\beta_{1,2n-1}> \phi$ (equivalently, $C[A_0]\succ 0$).
In this case, $\mathcal{CV}(L_{\beta}) = \Gamma$, which coincides with the union of supports of all
representing measures (respectively, all finitely atomic representing measures).
\end{theorem}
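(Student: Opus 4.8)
The plan is to reduce the problem, via Theorem \ref{Jmeasure}, to the question of whether some choice of the single auxiliary moment $A\equiv A_{2,2n-1}$ makes the core matrix $C[A]$ positive semidefinite and recursively generated, and then to show that the distinguished choice $A_0$ from \eqref{A} is optimal. First I would record that by the orthogonal equivalence $H[A]=P_\sigma C[A]P_{\sigma^{-1}}$, $C[A]$ is positive semidefinite (resp.\ positive definite) iff $H[A]$ is, where $H[A]$ has the block form \eqref{H[A]} with the leading $nd\times nd$ block $\widehat{C}\succ 0$ (Theorem \ref{Jtilde}). Since the northwest corner $\widehat{C}$ is already positive definite, a standard Schur-complement argument shows that $H[A]\succeq 0$ iff $\beta_{1,2n-1}\ge v^{t}\widehat{C}^{-1}v$, with strict inequality characterizing $H[A]\succ 0$; here $v=(h,A)^{t}$ as in \eqref{v}. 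The key computation is then that the quadratic function $A\mapsto v^{t}\widehat{C}^{-1}v = h^{t}\mathcal{C}h+2A\,w^{t}h+A^{2}\epsilon$ (using \eqref{Jhatinverse}) is minimized exactly at $A=A_0=-w^{t}h/\epsilon$ since $\epsilon>0$, and at that point it equals $\phi=h^{t}C_1^{-1}h$ by the computation \eqref{rho}. Consequently: if $\beta_{1,2n-1}>\phi$ then $C[A_0]\succ 0$, while if $\beta_{1,2n-1}\le\phi$ then $v^{t}\widehat{C}^{-1}v\ge\phi\ge\beta_{1,2n-1}$ for \emph{every} $A$, so no choice of $A$ makes $C[A]\succeq 0$ except possibly with $\det=0$ — and I must handle that boundary case.

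Next I would handle the forward direction, i.e.\ that a representing measure forces $\beta_{1,2n-1}>\phi$. If $\beta$ has a representing measure then by Theorem \ref{Jmeasure}(iii) there is $A$ with $C[A]\succeq 0$ and recursively generated; by the paragraph above this forces $\beta_{1,2n-1}\ge v^{t}\widehat{C}^{-1}v\ge\phi$. It remains to rule out equality. If $\beta_{1,2n-1}=\phi$, then necessarily $A=A_0$ and $H[A_0]$ is positive semidefinite but singular, with kernel vector $e:=(-\widehat{C}^{-1}v,\,1)^{t}$; by \eqref{kervector} its $(nd)$-th coordinate (the one corresponding to $A$, which after the $\sigma$-swap is the last coordinate of $\widehat{C}^{-1}v$) vanishes, so in the original $C[A_0]$-indexing the kernel vector $\widehat{r}$ has the form $(r_0,\dots,r_{nd-1},0)$ with $r_{nd-1}=$ the $\beta_{0,2n}$-slot nonzero (else $\widehat{C}$ singular), i.e.\ $\widehat r$ has a zero in its last slot but is supported up to index $nd-1$. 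This is exactly the configuration needed to invoke Remark \ref{reverse}: taking $\widehat s=0$, the consistency requirement \eqref{d3requirement}, $r_{3n}r_{3n-1}+s_{3n}s_{3n-1}=0$, holds because $r_{3n}=0$; hence there is $p\in\ker L_\beta$ with $p|\Gamma=R(x)^2$ where $\deg R\le nd-1=3n-1$, giving $\Card\mathcal{CV}(L_\beta)\le 3n-1<3n=\Rank M_n$ by \eqref{rank-of-pure-Mn}, so $\mathcal{CV}(L_\beta)=\emptyset$ by Corollary \ref{coretest}, contradicting the existence of a measure. (One small point to verify here is that $C[A_0]$ singular with that kernel structure also means $C[A_0]$ fails to be recursively generated, which is consistent; either route — Remark \ref{reverse} or failure of recursiveness in Theorem \ref{Jmeasure}(iii) — closes the case, and I would present the core-variety one as it is the sharper statement.)

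For the converse and the final assertions: if $\beta_{1,2n-1}>\phi$ then $C[A_0]\succ 0$, so the last sentence of Theorem \ref{Jmeasure} applies directly, giving finitely atomic representing measures whose union of supports is $\Gamma$; combined with Proposition \ref{coreprop} (so that $\Gamma\supseteq\mathcal{CV}(L_\beta)\supseteq\supp\mu$ forces $\mathcal{CV}(L_\beta)=\Gamma$) and the Core Variety Theorem \ref{cvt} (which identifies $\mathcal{CV}(L_\beta)$ with the union of supports of all finitely atomic representing measures, hence by Proposition \ref{coreprop} of all representing measures), this yields $\mathcal{CV}(L_\beta)=\Gamma$ exactly as claimed. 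The parenthetical equivalence ``$\beta_{1,2n-1}>\phi \iff C[A_0]\succ 0$'' is precisely the Schur-complement statement established at the outset. I expect the main obstacle to be the boundary case $\beta_{1,2n-1}=\phi$: one must argue carefully that \emph{every} admissible $A$ is then forced to be $A_0$ (this uses strict convexity in $A$, i.e.\ $\epsilon>0$) and that the resulting kernel vector of $C[A_0]$ has its top entry in slot $nd-1$ nonzero with the $nd$-slot zero, so that Remark \ref{reverse} produces a polynomial of degree at most $3n-1$ rather than $3n$; everything else is bookkeeping with the block decompositions \eqref{Hmatrix}--\eqref{rho} already set up in the text.
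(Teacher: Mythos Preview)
Your proposal is correct and largely parallels the paper's proof. For $\beta_{1,2n-1}>\phi$ and $\beta_{1,2n-1}=\phi$ the arguments are essentially identical to the paper's (Schur complement on $\widehat C\succ 0$ to get $C[A_0]\succ 0$; in the boundary case, the kernel vector $\widehat u=(\widehat C^{-1}v,-1)^t$ of $H[A_0]$, swapped back to a kernel vector $\widehat r$ of $C[A_0]$ with $r_{nd}=0$, feeding Remark~\ref{reverse} and Corollary~\ref{coretest}). Where you diverge is the case $\beta_{1,2n-1}<\phi$: instead of the paper's explicit core-variety witness---taking $\widehat s=\big((\phi-\beta_{1,2n-1})^{1/2},0,\dots,0\big)^t$ so that $p(x,x^3)=R(x)^2+S(x)^2\ge \phi-\beta_{1,2n-1}>0$ on all of $\Gamma$, forcing $\mathcal{CV}(L_\beta)=\emptyset$ directly---you use your minimization observation $\min_A v^t\widehat C^{-1}v=\phi$ together with Theorem~\ref{Jmeasure}(iii) to conclude that no $A$ makes $C[A]\succeq 0$. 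The paper in fact records exactly this alternate route in Remark~\ref{alt}, phrased via the $A$-free principal submatrix $\left(\begin{smallmatrix}C_1&h\\ h^t&\beta_{1,2n-1}\end{smallmatrix}\right)$ of $H[A]$. Your version is tidier and makes the role of $A_0$ as the Schur-complement minimizer explicit; the paper's version has the advantage of staying entirely within the core-variety framework that the section is meant to showcase.

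Two small corrections in your boundary-case bookkeeping. In $C$-indexing the slot $r_{nd-1}$ corresponds to the column $X^{2}Y^{n-1}$ (diagonal entry $\beta_{1,2n-1}$), not to $\beta_{0,2n}$. And you do not need the ``(else $\widehat C$ singular)'' side argument for $r_{nd-1}\ne 0$: by construction $r_{nd-1}=-1$, since the $-1$ in the last slot of $\widehat u$ is moved to the penultimate slot of $\widehat r$ under the swap $\sigma^{-1}$ (cf.\ \eqref{pvector}).
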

\begin{proof}
\CBB{Recall from Remark \ref{rem:ort-equiv} that $\widehat{C}$ is positive definite.}
Consider first the case $\beta_{1,2n-1}> \phi$.  
It follows from (\ref{Hmatrix}) and \cite[Theorem 1]{A} that
$H$ is positive definite. Since $C$ is orthogonally equivalent to $H$, we see that $C$ is
positive definite, so the existence of representing measures and the
conclusion concerning supports follow from Theorem \ref{Jmeasure}.

We next consider the case when $\beta_{1,2n-1}= \phi$, so that 
by \cite[Theorem 1]{A},
$H$ is positive semidefinite, but singular.
Since $\widehat{C} \succ 0$, it follows from (\ref{Hmatrix}) 
and \eqref{kervector}
that $\ker H$ contains the vector 
\begin{equation}\label{pvector}
\widehat{u}:= \left( \begin{array}{c}
 \widehat{C}^{-1}v \\
-1
  \end{array}\right)
\equiv 
 \left( \begin{array}{c}
 \mathcal{C}h -\frac{w^th}{\epsilon}w \\
0 \\
-1
  \end{array}\right)
\equiv (r_{0},~r_{1},\ldots, ~r_{dn-2}, ~u_{dn-1},~u_{dn})^{t}, 
\end{equation}
where $u_{dn-1}=0$ and $u_{dn} =-1$.
From the orthogonal equivalence between $H$ and $C$, based on the interchange of rows and columns $nd$ and $nd+1$,
 it follows that $C$ is positive semidefinite
and that $\ker C$ contains the vector 
\begin{equation} 
\label{def-r}
    \widehat{r} =  (r_{0},~r_{1},\ldots, ~r_{dn-2},~r_{dn-1} ,~r_{dn})^{t}, 
\end{equation}
where
\begin{equation} 
\label{def-r-v2}
r_{dn-1}=u_{dn}=-1\quad\text{and}\quad r_{dn}=u_{dn-1}=0.
\end{equation}
Let $\widehat{s}\equiv (s_{0},\ldots,~s_{dn})^{t} $
denote the $0$ vector, so that $\langle C\widehat{r},\widehat{r}\rangle +\langle C\widehat{s},\widehat{s}\rangle =0$
and 
the auxiliary \CBB{requirement of \eqref{eq:cons-terms}}, $r_{dn-1}r_{dn}+s_{dn-1}s_{dn}=0$, is satisfied. Now, following Remark \ref{reverse},
define $a_{ij} = h_{ij}(\widehat{r},\widehat{s})$ ($0\le i\le 2$, $j\ge 0$, $0<i+j\le 2n$).
Then $p:= \sum a_{ij}f_{ij}$ is an element of $\ker L_{\beta}$
which satisfies $Q(x):= p(x,x^{3}) = R(x)^{2}$, where 
    $$R(x) := r_{0} + r_{1}x +\cdots + r_{dn-1}x^{dn-1} + r_{dn}x^{dn}.$$ 
Since $r_{dn} =0$, $R(x)$ has at most $dn-1$ real zeros, so $p$ has at most $dn-1$ zeros in the curve $y=x^{3}$.
Now $p\in \ker L_{\beta}$ satisfies $p|\Gamma\ge 0$ and $\Card \mathcal{Z}(p|\Gamma) \le dn-1<  \frac{d(2n-d+3)}{2} =\Rank M_{n} $
(since $d=3$),
so Corollary \ref{coretest}
implies that $\beta$ has no representing measure.

To complete the proof, we consider the case when $\beta_{1,2n-1}< \phi$.
From (\ref{Hmatrix}) and (\ref{pvector}) we have 
\begin{align*}
\langle H\widehat{u},~\widehat{u}\rangle 
&= 
 \langle  \left( \begin{array}{c}
 0_{dn\times 1} \\
v^{t}\widehat{C}^{-1}v- \beta_{1,2n-1}
  \end{array}\right), 
\left( \begin{array}{c}
 *_{dn\times 1} \\
-1
  \end{array}\right)
\rangle \\
&= \beta_{1,2n-1} - v^{t}\widehat{C}^{-1}v\\
&= \beta_{1,2n-1}-\phi<0.
\end{align*}
Recall that $H = P^TCP$ (cf.\ \eqref{H-permutation}).
Setting $\widehat{r}:=   P\widehat{u}$, we have 
$$\langle C\widehat{r},~\widehat{r} \rangle
 = \langle H\widehat{u},\widehat{u}\rangle < 0,$$
where $\widehat r$ is as in \eqref{def-r}, \eqref{def-r-v2}.
Let $\epsilon
= (\phi -\beta_{1,2n-1})^{1/2}$. 
Since $\langle \widehat{C}e_{1},~e_{1}\rangle = \beta_{00} =1$,  then   the constant polynomial
$S(x) = \epsilon$, with coefficient vector $\widehat{s} = (\epsilon, 0,\ldots, 0)^{t}$,
satisfies $s_{dn-1}s_{dn} \CB{=0}$, and we have
$ \langle C \widehat{r},~\widehat{r} \rangle + \langle C\widehat{s},~\widehat{s} \rangle = 0$.
So $\widehat{r}$ and $\widehat{s}$ together satisfy the
  \CB{auxiliary requirement}  
\CBB{of \eqref{eq:cons-terms}}.
Constructing $p(x,y)$ as in Remark \ref{reverse}, we have that $p\in \ker L_{\beta}$.
Now, $p(x,x^{d}) = R(x)^{2}+ S(x)^{2}   \ge \epsilon^{2} >0$.
Since $p$ is strictly
positive on $\Gamma$, then $\mathcal{CV}(L_{\beta}) =\emptyset$, and therefore $\beta$ has no representing measure.
\end{proof}
\begin{rem}\label{alt}
 In Theorem \ref{y=x3}, an alternative proof of the case $\beta_{1,2n-1}<\phi$
can be based on Theorem \ref{Jmeasure},  as follows. Let $A_{0}$ be as in (\ref{A}).
If $\beta_{1,2n-1}<\phi[A_{0}]$, then (\ref{rho}) implies that 
$\beta_{1,2n-1}<
 h^{t}\CBB{C_{dn-1}^{-1}}h$.
It therefore follows from (\ref{H[A]}) that for every $A\in \mathbb{R}$, the matrix
$\left( \begin{array}{cc}
\CBB{C_{dn-1}} & h\\
h^{t} & \beta_{1,2n-1}
  \end{array}\right)$
is a principal submatrix of $H[A]$ that is not positive semidefinite.
Thus, for every $A$, $H[A]$, and hence $C[A]$, is not positive semidefinite, so Theorem \ref{Jmeasure}
implies that $\beta$ has no representing measure.
\end{rem}  

\begin{cor}
\label{thm:one-formulation-v2}
     Suppose $M_{n}(\beta)$ is
positive semidefinite and
 $(y-x^{3})$-pure. Then $\beta$ has a representing measure if and only if
$\det C_{dn} > 0$, in which case $\mathcal{CV}(L_{\beta})$ is the curve $y=x^{3}$.
\end{cor}

\begin{proof}
    Note that $C_{dn}$ is equal to 
        $
        \begin{pmatrix}
            \CBB{C_{dn-1}} & h \\ h^t & \beta_{1,2n-1}
        \end{pmatrix}
        $ (cf.\ \eqref{H[A]}).
    From $\widehat{C}\succ 0$ it follows that $\CBB{C_{dn-1}}\succ 0$ (cf.\ \eqref{def:widehat-C}).
    Using \cite{A}, we have that
    $$
        C_{dn}\succ 0 
        \;\;\Longleftrightarrow\;\; \beta_{1,2n-1}>h^t\CBB{C_{dn-1}^{-1}}h
        \;\;\underbrace{\Longleftrightarrow}_{\eqref{rho}}\;\; \beta_{1,2n-1}>\phi.
    $$  
    Now the statement of the corollary follows from Theorem \ref{y=x3}.
\end{proof}

In \cite{f1} a rather lengthy  construction with moment matrices is used to derive
a certain rational expression in the moment data, denoted by $\psi$ in \cite{f1},
such that $\beta$ has a representing measure if and only if $\beta_{1,2n-1} > \psi$,
in which case $M_{n}$ admits a flat extension $M_{n+1}$. In view of Theorem \ref{y=x3},
it is clear that $\psi = \phi$ (although this is not at all apparent from the definitions of
these expressions).

\begin{cor}
Suppose $M_{n}(\beta)$ is positive semidefinite and $(y-x^{3})$-pure. The following are equivalent:
\begin{enumerate}[leftmargin=*]
    \item  $\beta$ has a representing measure;
    \smallskip
    \item $\beta$ has a finitely atomic measure;
    \smallskip
    \item $M_{n}(\beta)$ has a flat extension $M_{n+1}$;
    \smallskip
    \item $\mathcal{CV}(L_{\beta}) \not = \emptyset$;
    \smallskip
    \item With $A$ defined by (\ref{A}) and $\phi$ defined by (\ref{rho}),  
        $\beta_{1,2n-1} > \phi$;
    \smallskip
    \item $\mathcal{CV}(L_{\beta}) = \Gamma$.
\end{enumerate}
\end{cor}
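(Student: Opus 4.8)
The plan is to close a cycle of implications, with almost every link already supplied by earlier results; the only one requiring genuine work is (i)$\Rightarrow$(iii). First I would record the easy links. Theorem~\ref{y=x3} gives directly that (i)$\Leftrightarrow$(v), and moreover that (v) implies both $\mathcal{CV}(L_{\beta})=\Gamma$, i.e.\ (vi), and $C[A_0]\succ 0$. Theorem~\ref{Jmeasure} (through Richter's Theorem \cite{Ric}) gives (i)$\Leftrightarrow$(ii). The implication (vi)$\Rightarrow$(iv) is immediate since $\Gamma\neq\emptyset$, and (iv)$\Rightarrow$(i) is exactly the Core Variety Theorem~\ref{cvt}. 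Finally (iii)$\Rightarrow$(i) is the Flat Extension Theorem \cite{cf2005} (applied with $k=0$). After these we are reduced to proving (v)$\Rightarrow$(iii).

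For (v)$\Rightarrow$(iii) I would construct a flat extension $M_{n+1}$ directly, exploiting the Hankel structure of the core matrix. Assume $\beta_{1,2n-1}>\phi$, so $C\equiv C[A_0]\succ 0$ by Theorem~\ref{y=x3}. View $C[A_0]$ as a univariate $(3n+1)\times(3n+1)$ Hankel matrix $(\gamma_{i+j})_{0\le i,j\le 3n}$, where $\gamma_{6n-1}=A_0$ is the unique auxiliary entry and $\gamma_{6n}=\beta_{0,2n}$; its leading $3n\times 3n$ principal block $G=(\gamma_{i+j})_{0\le i,j\le 3n-1}$ is then a genuine, auxiliary-free, positive definite Hankel matrix. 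Positive definiteness of $C[A_0]$ says exactly that the (scalar) Schur complement of $G$ in $C[A_0]$ is positive, i.e.\ $\beta_{0,2n}>b(A_0)^{t}G^{-1}b(A_0)$, where $b(t):=(\gamma_{3n},\dots,\gamma_{6n-2},t)^{t}$. Since $t\mapsto b(t)^{t}G^{-1}b(t)$ is an upward parabola whose leading coefficient is the last diagonal entry of $G^{-1}$ (positive, as $G\succ 0$), its minimum is at most $b(A_0)^{t}G^{-1}b(A_0)<\beta_{0,2n}$, so there exists $t^{\ast}\in\RR$ with $b(t^{\ast})^{t}G^{-1}b(t^{\ast})=\beta_{0,2n}$. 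For such $t^{\ast}$ the Hankel matrix of $\gamma':=(\gamma_0,\dots,\gamma_{6n-2},t^{\ast},\beta_{0,2n})$ has Schur complement $0$ over $G$, hence is a rank-$3n$ (flat) extension of $G$, and by \cite[Theorem~3.9]{cf1991} it provides a $3n$-atomic representing measure $\widetilde\mu=\sum_{k=1}^{3n}a_k\delta_{x_k}$ for $\gamma'$.

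It remains to transfer this to the plane. Set $\mu:=\sum_{k=1}^{3n}a_k\delta_{(x_k,x_k^{3})}$, supported on $\Gamma$. One checks, exactly as in the proof of Theorem~\ref{Jmeasure}, that $\mu$ represents $\beta^{(2n)}$: reducing any monomial $x^{i}y^{j}$ with $i+j\le 2n$ modulo $y=x^{3}$ to one with $0\le i\le 2$, the exponent $i+3j$ lies in $\{0,\dots,6n\}\setminus\{6n-1\}$, so the auxiliary entry $A_0=\gamma_{6n-1}$ is never invoked, while $\gamma'_{6n}=\beta_{0,2n}$ was arranged by the choice of $t^{\ast}$. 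Since $\mu$ has $3n$ atoms and $\Rank M_n(\beta)=3n$ by \eqref{rank-of-pure-Mn}, we get $3n=\Rank M_n(\beta)\le\Rank M_{n+1}(\mu)\le\Card(\supp\mu)=3n$, so $M_{n+1}(\mu)$ is a rank-preserving, i.e.\ flat, extension of $M_n(\beta)$; this is (iii).

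The main obstacle is precisely the middle step above: producing a flat extension of $G$ that simultaneously realizes the prescribed moment $\beta_{0,2n}$. What makes it go through is the observation that positive definiteness of $C[A_0]$ already forces $\beta_{0,2n}$ to lie strictly above the minimum of the relevant parabola, so an admissible $t^{\ast}$ must exist. Alternatively, (i)$\Rightarrow$(iii) may be quoted from \cite{f1}, where a direct but considerably longer moment-matrix construction yields the flat extension $M_{n+1}$ whenever $\beta_{1,2n-1}>\psi$, combined with the identification $\psi=\phi$; the route sketched here is shorter and stays entirely within the core-matrix framework.
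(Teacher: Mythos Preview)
Your argument is correct and, for the easy links, essentially coincides with the paper's short proof: the paper also invokes Theorem~\ref{y=x3} for the equivalence of (i), (v), (vi), and the Core Variety Theorem for the cycle (i)$\Rightarrow$(iv)$\Rightarrow$(ii)$\Rightarrow$(i).

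The genuine difference is in how (iii) is brought in. The paper simply cites \cite{f1} for (i)$\Leftrightarrow$(iii), relying on the lengthy flat-extension construction there (together with the identification $\psi=\phi$). You instead give a direct, self-contained derivation of (v)$\Rightarrow$(iii) inside the core-matrix framework: treat $C[A_0]$ as the Hankel matrix of a univariate sequence, use the quadratic dependence of the Schur complement on the auxiliary entry to find $t^\ast$ making the Schur complement vanish, obtain a rank-$3n$ (flat) Hankel matrix, extract a $3n$-atomic measure on $\RR$ via \cite{cf1991}, and push it forward to $\Gamma$. The rank count $\Rank M_n=3n$ then forces $M_{n+1}(\mu)$ to be flat over $M_n(\beta)$. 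This is a clean and attractive route: it stays entirely within the machinery already developed in Sections~\ref{Section 2}--\ref{Section 4}, avoids the external appeal to \cite{f1}, and makes transparent why a flat extension must exist once $C[A_0]\succ 0$. The only small point to tighten is that \cite[Theorem 3.9]{cf1991} as used in the paper yields a finitely atomic measure; that it can be taken with exactly $\Rank C[t^\ast]=3n$ atoms follows from the flat-extension structure (the unique column relation in $C[t^\ast]$ has degree exactly $3n$), which you should make explicit.
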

\begin{proof}
The implications $(i)  \Longrightarrow (iv) \Longrightarrow (ii) \Longrightarrow (i)$ follow from
the Core Variety Theorem and its proof. The equivalence of $(i)$ and $(iii)$ is established in \cite{f1},
and the equivalence of $(i)$, $(v)$, and $(vi)$
is Theorem \ref{y=x3}.
\end{proof}

In \cite{ef} the authors used the results of [F1] to exhibit a family of positive  ($y-x^{3}$)-pure moment matrices
$M_{3}(\beta^{(6)})$ 
such that $\beta^{(6)}$ has no representing measure but the Riesz functional is positive (cf. Section \ref{sec:prel}).
Here, positivity of the functional cannot be derived from positivity of $M_{3}$ using an argument such as
$L(p) = L(\sum p_{i}^{2}) = \sum \langle M_{3}\widehat{p_{i}},\widehat{p_{i}}\rangle \ge 0$,
because, by the
theorem of Hilbert, not every nonnegative polynomial $p(x,y)$  of degree 6 can
be represented as a sum of squares. Using Theorem \ref{y=x3} we can extend this example
to a family of ($y-x^{3}$)-pure matrices $M_{n}$, for $n\ge 3$ as follows. 
\begin{example}\label{poslinfuncl}
Suppose $M \equiv M_{n}(\beta)$ is positive semidefinite and ($y-x^{3}$)-pure. Let $\phi\equiv \phi[\beta]$ be as in (\ref{rho})
and
suppose $\phi= \beta_{1,2n-1}$, so that $\beta$ has no representing measure by Theorem \ref{y=x3}.
We claim that the Riesz functional $L_{\beta}$ is positive.
Let $\widehat{M}$ denote the central compression  of $M$ to rows and columns that are of the form
$X^{i}Y^{j}$ with $0\le i < 3$, so that $\Rank M = \Rank\widehat{M}$ and $\widehat{M}\succ 0$.
Now let $\widetilde{\beta}$  be defined to coincide with $\beta$, except possibly in the $\beta_{1,2n-1}$
position.  
It follows from the structure of positive matrices that there exists $\delta >0$ such that if
$|\widetilde{\beta}_{1,2n-1} - \beta_{1,2n-1}|< \delta$, then $\widehat{M_{n}}(\widetilde{\beta})$
is positive definite. The structure of positive ($y-x^{3}$)-pure moment matrices now implies that 
$M_{n}(\widetilde{\beta})$ is positive semidefinite
and ($y-x^{3}$)-pure.  Now consider
the sequence $\beta^{[m]}$ which coincides with $\beta$ except that $\beta^{[m]}_{1,2n-1} = \beta_{1,2n-1}+1/m$.
It follows that there exists $m_{0} > 0$ such that if $m>m_{0}$, then $M^{[m]} \equiv M_{n}(\beta^{[m]})$
is positive semidefinite and ($y-x^{3}$)-pure. By the remarks preceding Theorem \ref{y=x3}, we have
$\beta^{[m]}_{1,2n-1} = \beta_{1,2n-1}+1/m > \beta_{1,2n-1} =\phi[M] = \phi[M_{n}(\beta^{[m]})]$,
so Theorem \ref{y=x3} implies that $\beta^{[m]}$ has a representing measure. Thus, $L_{\beta^{[m]}}$ is positive,
and since the cone of sequences with positive functionals is closed, it follows that $L_{\beta}$ is positive.
\end{example}
To exhibit $M_{n}(\beta)$ as in Example \ref{poslinfuncl}, we may start with any positive semidefinite $(y-x^{3})$-pure
$M_{n}(\beta')$. Define $\beta$ so that it coincides with $\beta'$ except that $\beta_{1,2n-1}=\phi[\beta']$.
If necessary, increase $\beta_{0,2n}$ to insure positivity of $M_{n}(\beta)$.  Then $M_{n}(\beta)$ is positive
semidefinite, $(y-x^{3})$-pure, and $\beta_{1,2n-1} = \phi[\beta'] = \phi[\beta]$  by the remarks preceding
Theorem \ref{y=x3}.

\bigskip
\bigskip

\noindent{\bf{Acknowledgement.}} The first-named author is grateful to Ra\'ul Curto for helpful discussions
concerning core varieties for TKMP for certain quadratic planar curves during a visit to the University of Iowa
in Fall, 2019.

The second-named author was supported by the Slovenian Research Agency 
program P1-0288 and grants J1-50002, J1-60011.


\begin{thebibliography}{CF10}



\bibitem[A]
{A}
	A.\ Albert, 
		Conditions for positive and nonnegative definiteness in terms of pseudoinverses,
			SIAM J. Appl. Math. 17(1969), 434--440.
\bibitem [B] {b} G. Blekherman, Positive Gorenstein ideals, Proc. Amer. Math. Soc. 143(2015), 69–86.
\bibitem [BF] {bf} G. Blekherman, L. Fialkow, The core variety and representing measures
in the truncated moment problem, J. Operator Theory 84(2020), 185--209.
\bibitem  [CF1]{cf1991} R. Curto, L. Fialkow, Recursiveness, positivity, and truncated
      moment problems, Houston J. Math 17(1991), 603--635.
\bibitem [CF2]{cf2000}
R. Curto and L. Fialkow, The truncated complex $K$-moment problem, Trans.\ Amer.\ Math.\
Soc.\ 352(2000), 2825--2855.
\bibitem [CF3]{cf2002} R. Curto, L. Fialkow, Solution to the singular quartic
moment problem, J. Operator Theory, 48(2002), 315--354.
\bibitem [CF4]{cf2004} R. Curto, L. Fialkow, Solution to the parabolic
moment problem, Integ.\
Equ.\ Operator Theory
50(2004), 169-196.
\bibitem [CF5]{cf2005} R. Curto, L. Fialkow, Truncated $K$-moment problems
   in several variables, J. Operator Theory 54(2005), 189--226.
\bibitem [CF6]{cf2005a} R. Curto, L. Fialkow, Solution of the truncated
hyperbolic moment problem, Integ.\ Equ.\ Operator Theory 
52(2005), 181--218.
\bibitem [CF7] {cf2008} R. Curto, L. Fialkow, An analogue of the Riesz-Haviland theorem for the truncated
moment problem, J.\ Funct.\ Anal.\ 255(2008), 2709--2731.
\bibitem [CFM] {cfm} R. Curto, L. Fialkow and H.M. M\"{o}ller, The extremal truncated moment problem, Integ.\ Equ.\ Operator Theory 60(2008), 177--200.
\bibitem [EF] {ef} C. Easwaran, L. Fialkow, Positive linear functionals without representing measures, Oper.\ Matrices 5(2011), 
434--425.
\bibitem[F1]
{F95}
	L. Fialkow, {Positivity, extensions and the truncated complex moment problem}, Contemporary Math. 185(1995), 133--150.
\bibitem [F2] {f1} L. Fialkow, Solution of the truncated moment problem with variety $y=x^{3}$,
Trans. Amer. Math. Soc. 363(2011), 3133--3165.
\bibitem [F3] {f2} L. Fialkow, The truncated moment problem on parallel lines, in: The Varied Landscape
of Operator Theory, Conference Proceedings, Timisoara, July 2-7, 2012, in: Theta Found. Internat. Book Ser. 
Math. Texts, vol. 20, Amer. Math. Soc., 2015, 99--116.
\bibitem [F4] {f3} L. Fialkow, The core variety of a multisequence in the truncated moment problem,
J. Math. Anal. Appl. 456(2017), 946--969.

\bibitem [FN] {fn} L. Fialkow, J. Nie, Positivity of Riesz functionals and solutions of quadratic and
quartic moment problems, J. Funct. Anal. 258(2010), 328--356.

\bibitem [FZ-] {fz} L. Fialkow, A.\ Zalar, A core variety approach to the pure $Y=X^d$ truncated moment problem: part 2, in preparation.
%
\bibitem [Ric]{Ric}
	H.\ Richter, 
	Parameterfreie Absch\"{a}tzung und Realisierung von Erwartungswerten,
		Bl.\ der Deutsch.\ Ges.\ Versicherungsmath. 3(1957), 147--161.
\bibitem  [Sch]{sch} K. Schm\"{u}dgen,  The Moment Problem,
Graduate Texts in Mathematics vol. 277, Springer, 2017.
\bibitem  [ST]{ST} J. Shohat and J. Tamarkin, The Problem of Moments,
        Math. Surveys I, Amer. Math. Soc., Providence, 1943.
\bibitem[Vas]{vas}
F.-H.\ Vasilescu, An idempotent approach to truncated moment problems, Integ.\ Equ.\ Oper.\ Theory 79(2014), 301--335.
\bibitem[Wol]
{Wol}
	Wolfram Research, Inc., Mathematica, Version 14.2, Wolfram Research, Inc., Champaign,
		IL, 2025.

\bibitem [YZ] {yz} S. Yoo, A. Zalar, The truncated moment problem on reducible cubic curves I: Parabolic and Circular type relations,
 Complex Anal. Oper. Theory. 18, 111(2024), 54 pp.
\bibitem [Z1] {z1} A. Zalar, The truncated Hamburger moment problems with gaps in the index set,
 Integ.\ Equ.\ Oper.\ Theory 93(2021), 36 pp.
\bibitem [Z2] {z2} A. Zalar, The strong truncated Hamburger moment problem with and without gaps,
 J. Math. Anal. Appl. 516(2022), 21 pp.
\bibitem [Z3] {z3} A. Zalar, The truncated moment problem on the union of parallel lines,
 Linear Algebra and its Applications 649(2022), 186--239.
\bibitem [Z4] {z4} A. Zalar, The truncated moment problem on curves $y=q(x)$ and $yx^{l}=1$, Linear and Multilinear Algebra 72,12(2024), 1922--1966.
\end{thebibliography}
\end{document}